\documentclass[a4paper,american,11pt]{amsart}

\usepackage[T1]{fontenc}
\usepackage{lmodern}
\usepackage{amsmath,amssymb,mathtools}
\usepackage{enumitem}
\usepackage[hidelinks]{hyperref}
\usepackage[backend=biber,style=numeric,sorting=nyt,maxbibnames=10]{biblatex}
\usepackage{array}
\usepackage{tabularx}
\usepackage{booktabs}
\numberwithin{equation}{section}

\newtheorem{theorem}{Theorem}[section]
\newtheorem{proposition}[theorem]{Proposition}
\newtheorem{lemma}[theorem]{Lemma}
\newtheorem{corollary}[theorem]{Corollary}
\theoremstyle{definition}
\newtheorem{definition}[theorem]{Definition}

\newtheorem{problem}[theorem]{Problem}
\newtheorem{conjecture}[theorem]{Conjecture}
\newcommand{\Z}{\mathbb Z}
\newcommand{\F}{\mathbb F}

\newcommand{\Op}{\mathcal O_p}
\newcommand{\Rect}{\mathcal R}
\newcommand{\Prect}{\mathcal P}
\newcommand{\vp}{v_p}
\DeclareMathOperator{\slf}{sl}
\begin{document}

\title[Ljunggren--Jacobsthal and Bailey-type congruences]
{Ljunggren--Jacobsthal and Bailey-Type Congruences for Rectangular Gaussian  Binomial Coefficients}

\author{Kevin Calderon}
\address{Departamento de Matem\'aticas, Facultad de Ciencias, Universidad Nacional Aut\'onoma de M\'exico, 04510 Ciudad de M\'exico, Mexico}
\email{kevincalderon@ciencias.unam.mx}

\begin{abstract}
Motivated by the polynomial form of Kalinin's Gaussian analogue of Wolstenholme's theorem, following Kalinin, we study a two-dimensional factorial ratio over the Gaussian integers, which we call the rectangular Gaussian binomial coefficient. For a rational prime $p\equiv 3\pmod 4$, we first prove that these coefficients are $p$-integral. Our main result is a Ljunggren--Jacobsthal-type supercongruence: for $p>5$ and every $k\geq 1$, simultaneous dilation of all four parameters by $p^k$ changes the coefficient by a multiple of $p^{3k}$. In particular, this proves the inert-prime case of a conjecture of Kalinin. We also establish a rectangular Bailey-type congruence modulo $p$ for parameters consisting of a large $p$-multiple and one base-$p$ digit. Its shape parallels Bailey's prime-power refinements of Lucas's theorem, but two additional ordinary binomial factors occur, reflecting the vertical and horizontal boundary strips of a rectangular block decomposition. The proofs combine reciprocal-power-sum estimates in $\Z_p[i]$ with factorizations of rectangular products into complete $p^k\times p^k$ blocks.
\end{abstract}

\subjclass[2020]{Primary 11A07; Secondary 11B65, 11R04}
\keywords{Gaussian integers, binomial congruences, Wolstenholme theorem, Ljunggren--Jacobsthal congruence, Bailey congruence, Lucas theorem, supercongruence}

\maketitle

\section{Introduction}

\subsection*{Motivation}

For integers $N\geq K\geq 0$, the binomial coefficient can be written as the quotient
\begin{equation}\label{eq:ordinary-binomial-product}
\binom{N}{K}
=
\frac{N(N-1)\cdots(N-K+1)}{1\cdot2\cdots K}
=
\frac{\displaystyle\prod_{x=N-K+1}^{N}x}
{\displaystyle\prod_{x=1}^{K}x}.
\end{equation}
One of the most striking arithmetic properties of these coefficients is Wolstenholme's congruence
\begin{equation}\label{eq:wolstenholme-classical}
\binom{2p-1}{p-1}\equiv 1\pmod{p^3},
\qquad p\geq5,
\end{equation}
for every prime $p$ \cite{Wolstenholme1862}. Equivalently,
\begin{equation}\label{eq:wolstenholme-product}
(2p-1)(2p-2)\cdots(p+1)
\equiv
(p-1)!
\pmod{p^3}.
\end{equation}
This equivalence is especially transparent from the polynomial
\[
f_p(X):=\prod_{n=1}^{p-1}(X-n).
\]
Since $p-1$ is even,
\begin{equation}\label{eq:wolstenholme-polynomial-quotient}
\frac{f_p(2p)}{f_p(0)}
=
\frac{\displaystyle\prod_{n=1}^{p-1}(2p-n)}
{\displaystyle\prod_{n=1}^{p-1}n}
=
\binom{2p-1}{p-1}.
\end{equation}
Consequently, the product congruence $f_p(2p)\equiv f_p(0)\pmod{p^3}$ becomes Wolstenholme's binomial congruence after division by the basic factorial product.

Kalinin's Gaussian version of Wolstenholme's theorem suggests that the same mechanism should persist in the lattice of Gaussian integers \cite{Kalinin2025,KalininZottor2026Gaussian}. Let
\begin{equation}\label{eq:admissible-gaussian-block}
\mathcal U_p
:=
\left\{
 n+mi:
 1\leq n,m\leq p-1,
 \quad p\nmid n^2+m^2
\right\},
\end{equation}
and write $r:=|\mathcal U_p|$. Consider the polynomial
\begin{equation}\label{eq:gaussian-root-polynomial}
g_p(Z)
:=
\prod_{\alpha\in\mathcal U_p}(Z-\alpha)
=
Z^r+a_1Z^{r-1}+\cdots+a_{r-1}Z+a_r.
\end{equation}
Its constant coefficient is
\[
a_r=(-1)^r\prod_{\alpha\in\mathcal U_p}\alpha.
\]
Every $\alpha\in\mathcal U_p$ is nonzero modulo $p$, so $a_r$ is a $p$-adic unit. The coefficients next to the constant term are controlled by symmetric functions of the reciprocal roots. Indeed, Vieta's formulas give
\begin{equation}\label{eq:vieta-reciprocals}
\frac{a_{r-k}}{a_r}
=
(-1)^k e_k\bigl(\alpha^{-1}:\alpha\in\mathcal U_p\bigr),
\end{equation}
where $e_k$ denotes the $k$-th elementary symmetric polynomial.

Introduce the reciprocal power sums
\begin{equation}\label{eq:kalinin-power-sums}
S_p^{(j)}
:=
\sum_{\alpha\in\mathcal U_p}\frac{1}{\alpha^j}.
\end{equation}
Kalinin's Gaussian Wolstenholme theorem yields, for $p>5$,
\begin{equation}\label{eq:kalinin-valuations}
\begin{aligned}
S_p^{(1)}&\equiv0\pmod{p^4},
&\qquad S_p^{(2)}&\equiv0\pmod{p^3},\\
S_p^{(3)}&\equiv0\pmod{p^2},
& S_p^{(4)}&\equiv0\pmod p.
\end{aligned}
\end{equation}
Equivalently,
\[
\vp\bigl(S_p^{(j)}\bigr)\geq5-j,
\qquad 1\leq j\leq4.
\]
Newton's identities read
\begin{equation}\label{eq:newton-introduction}
k e_k
=
\sum_{j=1}^{k}(-1)^{j-1}e_{k-j}S_p^{(j)},
\qquad e_0=1.
\end{equation}
Because $1,2,3,4$ are invertible modulo $p$ when $p>5$, equations \eqref{eq:kalinin-valuations} and \eqref{eq:newton-introduction} imply inductively that
\begin{equation}\label{eq:elementary-symmetric-valuations}
e_k\bigl(\alpha^{-1}:\alpha\in\mathcal U_p\bigr)
\equiv0\pmod{p^{5-k}},
\qquad 1\leq k\leq4.
\end{equation}
Since $a_r$ is a unit, \eqref{eq:vieta-reciprocals} consequently gives
\begin{equation}\label{eq:coefficient-valuations}
a_{r-k}\equiv0\pmod{p^{5-k}},
\qquad 1\leq k\leq4.
\end{equation}

Now let $z=A+iB\in\Z[i]$ and evaluate \eqref{eq:gaussian-root-polynomial} at $Z=pz$. Expanding from the constant term gives
\[
g_p(pz)
=
a_r+a_{r-1}pz+a_{r-2}p^2z^2+a_{r-3}p^3z^3+a_{r-4}p^4z^4+\cdots.
\]
For $1\leq k\leq4$, the term $a_{r-k}p^kz^k$ is divisible by $p^{5-k}p^k=p^5$, while every term of degree at least five is automatically divisible by $p^5$. Therefore
\begin{equation}\label{eq:gaussian-polynomial-congruence}
g_p(pA+ipB)\equiv g_p(0)\pmod{p^5}.
\end{equation}
When $p\equiv3\pmod4$, the congruence $n^2+m^2\equiv0\pmod p$ forces $n\equiv m\equiv0\pmod p$. Hence $\mathcal U_p$ is the entire square $1\leq n,m\leq p-1$, and $r=(p-1)^2$ is even. Thus \eqref{eq:gaussian-polynomial-congruence} becomes the product congruence
\begin{equation}\label{eq:kalinin-product}
\prod_{n=1}^{p-1}\prod_{m=1}^{p-1}
\bigl(pA+ipB-(n+mi)\bigr)
\equiv
\prod_{n=1}^{p-1}\prod_{m=1}^{p-1}(n+mi)
\pmod{p^5}.
\end{equation}

In the ordinary setting, the quotient of the two products in \eqref{eq:wolstenholme-product} is a binomial coefficient. It is therefore natural to seek a two-dimensional factorial ratio that packages \eqref{eq:kalinin-product} in the same way. This leads to the following definition.

\begin{definition}\label{def:rectangular-coefficient}
For integers
\[
A\geq C\geq1,
\qquad
B\geq D\geq1,
\]
the \emph{rectangular Gaussian binomial coefficient} is
\begin{equation}\label{eq:def-R}
\Rect(A,B;C,D)
:=
\frac{\displaystyle
\prod_{n=0}^{C-1}\prod_{m=0}^{D-1}
\bigl(A+Bi-(n+mi)\bigr)}
{\displaystyle
\prod_{n=1}^{C}\prod_{m=1}^{D}(n+mi)}.
\end{equation}
\end{definition}

Equivalently,
\begin{equation}\label{eq:def-R-rectangle}
\Rect(A,B;C,D)
=
\frac{\displaystyle
\prod_{x=A-C+1}^{A}\prod_{y=B-D+1}^{B}(x+iy)}
{\displaystyle
\prod_{x=1}^{C}\prod_{y=1}^{D}(x+iy)}.
\end{equation}
The numerator is the product over a translated $C\times D$ lattice rectangle, while the denominator is the product over the basic $C\times D$ rectangle. Thus \eqref{eq:def-R-rectangle} is the direct two-dimensional counterpart of the interval quotient \eqref{eq:ordinary-binomial-product}. The word ``Gaussian'' refers to the lattice $\Z[i]$; these coefficients are unrelated to the usual $q$-binomial coefficients, which are also called Gaussian binomial coefficients. Moreover, they need not lie in $\Z[i]$, so their congruences must be interpreted locally.

The connection with Kalinin's product is exact. Setting $C=D=p-1$ and replacing the two upper parameters by $pA-1$ and $pB-1$, respectively, gives
\begin{align}\label{eq:gaussian-product-quotient}
\Rect(pA-1,pB-1;p-1,p-1)
&=
\frac{\displaystyle
\prod_{n=1}^{p-1}\prod_{m=1}^{p-1}
\bigl(pA+ipB-(n+mi)\bigr)}
{\displaystyle
\prod_{n=1}^{p-1}\prod_{m=1}^{p-1}(n+mi)}.
\end{align}
Consequently, \eqref{eq:kalinin-product} is precisely the binomial-type congruence
\begin{equation}\label{eq:kalinin-rectangular-special-case}
\Rect(pA-1,pB-1;p-1,p-1)\equiv1\pmod{p^5}.
\end{equation}
This identity explains where the new coefficient comes from: it is the Gaussian rectangular factorial ratio naturally produced by the polynomial form of Kalinin's theorem.

\subsection*{The two motivating families of congruences}

The first classical model is the Ljunggren--Jacobsthal congruence. For a prime $p\geq5$, it implies in particular that
\begin{equation}\label{eq:classical-iterated}
\binom{p^kA}{p^kC}
\equiv
\binom{p^{k-1}A}{p^{k-1}C}
\pmod{p^{3k}},
\qquad k\geq1;
\end{equation}
see \cite{BrunEtAl1949,Straub2011,Rowland2022}. Kalinin conjectured the rectangular analogue of the case $k=1$ \cite[Conjecture~3]{Kalinin2025}. Our first theorem proves the full iterated form for primes inert in $\Z[i]$.

\begin{theorem}\label{thm:ljunggren}
Let $p>5$ be a prime with $p\equiv3\pmod4$, and let $k\geq1$. For all integers $A\geq C\geq1$ and $B\geq D\geq1$,
\begin{equation}\label{eq:main-congruence}
\Rect(p^kA,p^kB;p^kC,p^kD)
\equiv
\Rect(p^{k-1}A,p^{k-1}B;p^{k-1}C,p^{k-1}D)
\pmod{p^{3k}}.
\end{equation}
\end{theorem}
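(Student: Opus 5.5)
We write $q=p^k$ and $q'=p^{k-1}$, and we work in $\Z_p[i]$, where $p$ stays prime because $p\equiv 3\pmod 4$. A Gaussian integer $x+iy$ is a unit there exactly when $p\nmid x$ or $p\nmid y$. The plan is to split both products in \eqref{eq:def-R-rectangle} into a \emph{$p$-divisible part} and a \emph{unit part}.

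\textbf{Step 1: the $p$-divisible part.} The lattice points with $p\mid x$ and $p\mid y$ contribute $p\,(x'+iy')$. In the numerator of $\Rect(qA,qB;qC,qD)$ the rectangle is $[qA-qC+1,qA]\times[qB-qD+1,qB]$. Its points with both coordinates divisible by $p$ are exactly $p$ times the points of $[q'A-q'C+1,q'A]\times[q'B-q'D+1,q'B]$. The same holds for the denominator rectangle. The two rectangles have the same number of points, so the powers of $p$ cancel. Hence the $p$-divisible part of $\Rect(qA,qB;qC,qD)$ equals $\Rect(q'A,q'B;q'C,q'D)$ exactly.

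\textbf{Step 2: reduce to a unit-ratio statement.} After Step 1 we have
\[
\Rect(qA,qB;qC,qD)=\Rect(q'A,q'B;q'C,q'D)\cdot U,
\]
where $U$ is the ratio of the unit parts of the numerator and denominator. We need $U\equiv 1\pmod{p^{3k}}$. We also need $p$-integrality of $\Rect(q'A,\dots)$, which I would prove separately, presumably as the paper's first result, for example by induction using this same decomposition. Both rectangles are unions of complete $q\times q$ blocks, since $C,D$ count blocks. So $U$ is a product over block offsets $(a,b)$, with $0\le a<C$ and $0\le b<D$, of
\[
\frac{\prod_{\alpha\in\mathcal V_q}\bigl(q(A-C+a)+iq(B-D+b)+\alpha\bigr)}{\prod_{\alpha\in\mathcal V_q}\bigl(qa+iqb+\alpha\bigr)},
\]
where $\mathcal V_q$ is the set of units in the block $\{x+iy:1\le x,y\le q\}$. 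It therefore suffices to show that for every $z\in\Z[i]$,
\[
G_q(qz):=\prod_{\alpha\in\mathcal V_q}(qz+\alpha)\equiv G_q(0)\pmod{p^{3k}}.
\]
Then each factor of $U$ is $\equiv 1$.

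\textbf{Step 3: the block congruence (main obstacle).} Expanding the logarithm, we get
\[
\log\frac{G_q(qz)}{G_q(0)}=\sum_{j\ge1}\frac{(-1)^{j-1}}{j}\,q^jz^j\,T_j,
\qquad T_j=\sum_{\alpha\in\mathcal V_q}\alpha^{-j}.
\]
So we need $v_p(q^jT_j/j)\ge 3k$ for all $j$. I would first try to prove $T_j\equiv 0\pmod{p^{k}}$ for all $j$ not divisible by $p^2-1$. This should follow because $\mathcal V_q$ is stable under multiplication by units of $(\Z[i]/q)^\times$, with a correction for representatives. It will also be necessary to get the sharper bounds $v_p(T_1)\ge 2k$ or more, and $v_p(T_2)\ge k$, which are the analogues of Kalinin's bounds \eqref{eq:kalinin-valuations} at level $p^k$.

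To get these sharper bounds, I would pair $\alpha$ with $-\alpha$ modulo the block, using $\alpha\mapsto (q+1+i(q+1))-\alpha$, and then expand $(c-\alpha)^{-j}$ in powers of $c$. I would also use the symmetry $\alpha\mapsto i\alpha$, which maps the block to a shifted block. Once the valuations of $T_j$ are in hand, the terms with $j\ge3$ are handled since $jk-v_p(j)\ge 3k$, because $p>5$. The terms with $j=1,2$ require the refined power-sum estimates. I expect these reciprocal-power-sum estimates in $\Z_p[i]$ for $q=p^k$ to be the hardest part. I would attempt them by induction on $k$, decomposing the $p^k$ block into $p^{2}$ sub-blocks of size $p^{k-1}$ plus a lift of Kalinin's base case.
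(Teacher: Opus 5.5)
Steps~1 and~2 and the logarithmic reduction are correct and follow the paper's architecture. Removing the factors divisible by $p$ gives $\Rect(p^{k-1}A,p^{k-1}B;p^{k-1}C,p^{k-1}D)$ exactly. The unit part is a product of translated complete $p^k\times p^k$ blocks. Block invariance modulo $p^{3k}$ then follows once $T_1\in p^{2k}\Op$ and $T_2\in p^k\Op$; the terms $j\ge 3$ are automatic, just as the paper's $e_r$ with $r\ge3$ are. Pairing numerator blocks directly with denominator blocks, instead of using the five-rectangle exponent $E=0$, is a harmless variant.

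\textbf{The gap is Step~3.} These two estimates carry all the arithmetic content of the theorem, and you leave them as expectations. Half of it is within reach of what you wrote. Since $i\mathcal V_q$ is again a system of representatives of $(\Z[i]/p^k\Z[i])^\times$, one gets $i^{-j}T_j\equiv T_j\pmod{p^k}$. Hence $T_j\in p^k\Op$ whenever $4\nmid j$, in particular for $j=2$. But such a symmetry argument never yields more than $p^k$. The device you propose for the extra $p^k$ in $T_1$ fails: the involution $\alpha\mapsto(q+1)(1+i)-\alpha$ preserves the square but not the unit condition. It sends $1+i\in\mathcal V_q$ to the non-unit $q(1+i)$, and more generally swaps the classes $x\equiv y\equiv1$ and $x\equiv y\equiv0\pmod p$. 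So it gives no pairing of $\mathcal V_q$ with itself.

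\textbf{A repair of your reflection idea.} Use $\alpha\mapsto c-\alpha$ with $c=q(1+i)$. This preserves units and maps $\mathcal V_q$ onto the units of $[0,q-1]^2$, so the two sets differ only in the strips $x\in\{0,q\}$ and $y\in\{0,q\}$. Modulo $p^{2k}$ one has $\frac{1}{q+iy}-\frac{1}{iy}\equiv qy^{-2}$ and $\frac{1}{x+iq}-\frac1x\equiv -iqx^{-2}$. Also $\sum_{1\le y\le q,\ p\nmid y}y^{-2}\in p^k\Z_p$ for $p>3$, by the same symmetry argument in $\Z/p^k\Z$. So the strip discrepancy vanishes modulo $p^{2k}$. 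Expanding $(c-\alpha)^{-1}=-\sum_{n\ge0}c^n\alpha^{-n-1}$ then gives $2T_1\equiv -cT_2-c^2T_3-\cdots\equiv 0\pmod{p^{2k}}$, using $T_2\in p^k\Op$. This would be a direct, non-inductive proof that avoids Kalinin's theorem.

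The paper takes a different route. It gets $H_1(1)\in p^2\Op$ from Kalinin's $p^4$ bound on the $(p-1)\times(p-1)$ square plus the classical harmonic congruences on the two boundary strips. It then inducts on $k$ via $z=u+p^{k-1}w$ and the digit moments $M_\ell$, which is your third suggestion.

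\textbf{A smaller gap: integrality.} You cannot get $p$-integrality of $\Rect(p^{k-1}A,\dots)$ from ``induction using the same decomposition''. At the bottom you need $\Rect(A,B;C,D)\in\Op$ for arbitrary parameters. There the numerator and denominator rectangles contain different numbers of $p$-divisible points, so the recursion does not apply. The paper proves integrality directly: it uses $\vp(x+iy)=\min\{\vp(x),\vp(y)\}$ and compares counts of multiples of $p^r$ via $N_r(A,C)\ge\lfloor C/p^r\rfloor$ and the analogous bound for $(B,D)$.
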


The second model is Bailey's refinement of the one-digit form of Lucas's theorem. Lucas's congruence is
\[
\binom{pA+\alpha}{pC+\gamma}
\equiv
\binom{A}{C}\binom{\alpha}{\gamma}
\pmod p,
\qquad 0\leq\gamma\leq\alpha<p,
\]
whereas Bailey showed that analogous coarse-block/digit factorizations persist modulo higher powers of $p$ \cite{Bailey1990,Bailey1992}; see also \cite{DavisWebb1990,Granville1997,Rowland2022}. This is the source of our second theorem. Its modulus is only $p$, but its decomposition into a large $p$-block and a residual digit rectangle is Bailey-type. Two additional ordinary binomial factors appear because, in two dimensions, the residual rectangle produces vertical and horizontal boundary strips.

\begin{theorem}\label{thm:bailey}
Let $p\equiv3\pmod4$ be an odd prime. Let $A\geq C\geq1$ and $B\geq D\geq1$, and suppose
\[
1\leq\gamma\leq\alpha\leq p-1,
\qquad
1\leq\delta\leq\beta\leq p-1.
\]
Then
\begin{align}\label{eq:bailey-congruence}
&\Rect(pA+\alpha,pB+\beta;pC+\gamma,pD+\delta)\notag\\
&\quad\equiv
\Rect(A,B;C,D)\,
\Rect(\alpha,\beta;\gamma,\delta)
\binom{\alpha}{\gamma}^{D}
\binom{\beta}{\delta}^{C}
\pmod p.
\end{align}
\end{theorem}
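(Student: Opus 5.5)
The plan is to split the numerator and denominator of \eqref{eq:def-R-rectangle} into two kinds of lattice points. The first kind are the points $x+iy$ with $p\mid x$ and $p\mid y$. The second kind are all the others, and since $p\equiv3\pmod4$ is inert in $\Z[i]$, these are exactly the points that are units in $\Z_p[i]$. The first kind will give $\Rect(A,B;C,D)$ exactly. The second kind will be computed modulo $p$ in the field $\Z[i]/(p)\cong\F_{p^2}$.

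\emph{Step 1 (the $p$-divisible points).} The numerator $x$-range is $[p(A-C)+\alpha-\gamma+1,\;pA+\alpha]$. Because $0\le\alpha-\gamma$ and $\alpha\le p-1$, its multiples of $p$ are exactly $pX$ with $A-C+1\le X\le A$. In the same way, the denominator range $[1,pC+\gamma]$ contains exactly the multiples $pX$ with $1\le X\le C$, and the $y$-ranges behave identically. The points with both coordinates divisible by $p$ therefore contribute
\[
\frac{p^{2CD}\prod_{X=A-C+1}^{A}\prod_{Y=B-D+1}^{B}(X+iY)}{p^{2CD}\prod_{X=1}^{C}\prod_{Y=1}^{D}(X+iY)}=\Rect(A,B;C,D),
\]
and this is an exact identity. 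The remaining factor is a ratio of $p$-adic units, so it suffices to evaluate it modulo $p$.

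\emph{Step 2 (residue bookkeeping).} Modulo $p$, the numerator $x$-coordinates form the multiset consisting of every residue of $\F_p$ taken $C$ times, together with the residual set $R_x^{\mathrm{num}}=\{\alpha-\gamma+1,\dots,\alpha\}$ taken once. The denominator $x$-coordinates are every residue $C$ times together with $R_x^{\mathrm{den}}=\{1,\dots,\gamma\}$. The residual $y$-sets $R_y^{\mathrm{num}}=\{\beta-\delta+1,\dots,\beta\}$ and $R_y^{\mathrm{den}}=\{1,\dots,\delta\}$ are described analogously. None of the residual sets contains $0$. We then take the product of $r+is$ over the nonzero pairs $(r,s)$ of this multiset grid and split it into four pieces.

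The full$\times$full piece is $CD$ copies of $\prod_{(r,s)\ne(0,0)}(r+is)$. It occurs identically in the numerator and denominator and cancels.

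For the full $x\times$ residual $y$ piece we use $\prod_{r\in\F_p}(t+r)=t^p-t$ with $t=is$. Since $i^p=-i$, this equals $-2is$. The piece therefore contributes
\[
\Bigl(\prod_{s\in R_y^{\mathrm{num}}}(-2is)\Big/\prod_{s\in R_y^{\mathrm{den}}}(-2is)\Bigr)^{C}=\binom{\beta}{\delta}^{C}.
\]
Symmetrically, for each $s$ we have $\prod_{s'\in\F_p}(r+is')=\pm2r$ up to the same unit, so the residual $x\times$ full $y$ piece gives $\binom{\alpha}{\gamma}^{D}$. The residual$\times$residual piece is exactly $\Rect(\alpha,\beta;\gamma,\delta)$ reduced modulo $p$.

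\emph{Main obstacle.} The key points are the careful counting in Step 1 and the evaluation of the strip products via $t^p-t$ in Step 2. In Step 1 the hypotheses $\gamma\le\alpha\le p-1$ and $\gamma\ge1$ are what ensure that the residual sets avoid $0$ and that exactly $C$ multiples of $p$ occur. In Step 2 the common powers of $-2i$ cancel because both residual sets have the same size, $\gamma$ or $\delta$. Multiplying the four pieces with the exact factor from Step 1 yields \eqref{eq:bailey-congruence}.
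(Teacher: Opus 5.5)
Your proof is correct. It rests on the same ingredients as the paper's proof:
\begin{itemize}
\item removing the factors $p(X+iY)$ exactly;
\item cancelling complete residue blocks;
\item the strip evaluations $\prod_{t\in\F_p}(t+is)=-2is$ and $\prod_{t\in\F_p}(r+it)=2r$, which come from $T^p-T$ and $i^p=-i$.
\end{itemize}

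Where you differ is the organization. The paper expands the coefficient by the five-term inclusion--exclusion \eqref{eq:R-via-P}. It applies the splitting \eqref{eq:P-splitting} to each origin-anchored factorial $\Prect(pM+a,pN+b)$ and evaluates every unit part by Lemma~\ref{lem:unit-product-mod-p} (complete blocks, vertical and horizontal strips, corner). It then checks that the powers of $p$, the block signs and the strip constants cancel, via exponent identities such as $AB+(A-C)(B-D)-(A-C)B-A(B-D)-CD=0$. In that route the digit coefficient $\Rect(\alpha,\beta;\gamma,\delta)$ is reassembled from five corner factors.

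You instead work directly with the two rectangles in \eqref{eq:def-R-rectangle}. In both numerator and denominator, each coordinate range consists of $C$ (or $D$) full residue periods plus a residual window of size $\gamma$ (or $\delta$). Consequently:
\begin{itemize}
\item the full$\times$full pieces cancel immediately;
\item the constants $-2i$ and $2$ cancel because the windows have equal length;
\item the residual$\times$residual piece is literally $\Rect(\alpha,\beta;\gamma,\delta)$.
\end{itemize}
Your route is shorter and avoids both the inclusion--exclusion and the sign bookkeeping. The paper's route yields a reusable Wilson-type evaluation of $\Prect(pM+a,pN+b)$ modulo $p$, and it mirrors the structure of the proof of Theorem~\ref{thm:ljunggren}.

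Three small points to fix:
\begin{itemize}
\item The $p$-divisible factors are $p(X+iY)$, so each side carries $p^{CD}$, not $p^{2CD}$. This is harmless, since the factor cancels.
\item The vertical strip product is exactly $2r$, not merely $\pm2r$ up to a unit. All you actually use is that the constant is independent of $r$, and the index there should be $r$, not $s$.
\item Passing from a congruence for the unit ratio to a congruence for the whole coefficient requires $\Rect(A,B;C,D)\in\Op$. This is Lemma~\ref{lem:integrality}, and you should cite it at the end of Step~1.
\end{itemize}
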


\subsection*{Organization of the paper}
Section~\ref{sec:preliminaries} fixes the local meaning of congruence, introduces rectangular factorials, and proves $p$-integrality. Section~\ref{sec:ljunggren-proof} establishes reciprocal-power-sum estimates on a complete $p^k\times p^k$ block and uses them to prove Theorem~\ref{thm:ljunggren}. Section~\ref{sec:bailey-proof} evaluates complete blocks and boundary strips modulo $p$, yielding Theorem~\ref{thm:bailey}. Section~\ref{sec:consequences} records immediate consequences, Gamma-function representations, and open questions concerning analogues of classical binomial identities.

\section{ Preliminaries}\label{sec:preliminaries}

Fix throughout this section an odd prime $p\equiv 3\pmod 4$. Then $T^2+1$ is irreducible over $\F_p$, so $p$ is inert in $\Z[i]$ and
\[
\Op=\Z_p[i]
\]
is the ring of integers in the unramified quadratic extension $\mathbb Q_p(i)/\mathbb Q_p$. For $X,Y\in\Op$ and $r\geq 0$, we write
\[
X\equiv Y\pmod{p^r}
\quad\Longleftrightarrow\quad
X-Y\in p^r\Op.
\]
For $x,y\in\Z$, not both zero, inertness gives
\begin{equation}\label{eq:valuation-coordinatewise}
\vp(x+iy)=\min\{\vp(x),\vp(y)\}
=\sum_{r\geq 1}\mathbf 1_{\{p^r\mid x,\;p^r\mid y\}}.
\end{equation}

For nonnegative integers $X,Y$, define the rectangular factorial
\begin{equation}\label{eq:def-P}
\Prect(X,Y):=\prod_{x=1}^{X}\prod_{y=1}^{Y}(x+iy),
\end{equation}
with $\Prect(0,Y)=\Prect(X,0)=1$. Multiplicative inclusion--exclusion gives
\begin{equation}\label{eq:R-via-P}
\Rect(A,B;C,D)
=
\frac{\Prect(A,B)\Prect(A-C,B-D)}
{\Prect(A-C,B)\Prect(A,B-D)\Prect(C,D)}.
\end{equation}

\begin{lemma}\label{lem:integrality}
For every $A\geq C\geq 1$ and $B\geq D\geq 1$,
\[
\Rect(A,B;C,D)\in\Op.
\]
\end{lemma}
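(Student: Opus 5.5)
We need to show $\vp(\text{numerator})\ge \vp(\text{denominator})$. By \eqref{eq:valuation-coordinatewise}, the valuation of a product over a lattice rectangle is obtained by counting points: for each $r\ge1$, the contribution at level $r$ is the number of lattice points $(x,y)$ in the rectangle with $p^r\mid x$ and $p^r\mid y$. Since the conditions on $x$ and $y$ decouple, this count factors as (number of multiples of $p^r$ in the $x$-interval) times (number of multiples of $p^r$ in the $y$-interval). Every factor $x+iy$ occurring is nonzero because $x,y\ge1$. So
\[
\vp\bigl(\Rect(A,B;C,D)\bigr)=\sum_{r\ge1}\Bigl(N_r(A-C,A)\,N_r(B-D,B)-\Bigl\lfloor \tfrac{C}{p^r}\Bigr\rfloor\Bigl\lfloor\tfrac{D}{p^r}\Bigr\rfloor\Bigr),
\]
where $N_r(a,b)=\lfloor b/p^r\rfloor-\lfloor a/p^r\rfloor$ is the number of multiples of $p^r$ in $(a,b]$.

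The key step is the elementary one-dimensional fact that every interval of $C$ consecutive integers contains at least $\lfloor C/p^r\rfloor$ multiples of $p^r$. Equivalently, $\lfloor A/p^r\rfloor-\lfloor (A-C)/p^r\rfloor\ge\lfloor C/p^r\rfloor$, which is the superadditivity $\lfloor u+v\rfloor\ge\lfloor u\rfloor+\lfloor v\rfloor$; this is also the inequality behind the integrality of ordinary binomial coefficients. Applying it in both coordinates, and using that all the quantities are nonnegative, gives $N_r(A-C,A)N_r(B-D,B)\ge\lfloor C/p^r\rfloor\lfloor D/p^r\rfloor$ term by term. Hence each summand is nonnegative and the valuation is $\ge0$. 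Because $\Op$ is a discrete valuation ring with uniformizer $p$ (inertness), an element of $\mathbb Q_p(i)$ with nonnegative valuation lies in $\Op$.

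There is no real obstacle. The only care needed is to justify that the valuation is additive over the product and that \eqref{eq:valuation-coordinatewise} lets us write it as the layer count $\sum_r \mathbf 1$. Note also that the route through \eqref{eq:R-via-P} is unnecessary; counting directly on the translated rectangle in \eqref{eq:def-R-rectangle} is cleaner.
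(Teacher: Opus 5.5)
Your proposal is correct and is essentially the paper's own proof. The paper also uses \eqref{eq:valuation-coordinatewise} to write the valuations of the numerator and denominator of \eqref{eq:def-R-rectangle} as $\sum_{r\geq1}$ of products of one-dimensional counts of multiples of $p^r$. It then compares these termwise, using the fact that any $C$ consecutive integers contain at least $\lfloor C/p^r\rfloor$ multiples of $p^r$, as you do.
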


\begin{proof}
For $r\geq 1$, let $N_r(A,C)$ denote the number of multiples of $p^r$ in the interval $[A-C+1,A]$. Every interval of $C$ consecutive integers contains at least $\lfloor C/p^r\rfloor$ multiples of $p^r$, and hence
\[
N_r(A,C)\geq \left\lfloor\frac{C}{p^r}\right\rfloor.
\]
The analogous inequality holds for $N_r(B,D)$. By \eqref{eq:valuation-coordinatewise}, the valuation of the numerator in \eqref{eq:def-R-rectangle} is
\[
\sum_{r\geq 1}N_r(A,C)N_r(B,D),
\]
whereas the valuation of its denominator is
\[
\sum_{r\geq 1}
\left\lfloor\frac{C}{p^r}\right\rfloor
\left\lfloor\frac{D}{p^r}\right\rfloor.
\]
The first quantity is at least the second, proving the claim.
\end{proof}

\section{The Ljunggren--Jacobsthal-type congruence}\label{sec:ljunggren-proof}

For $k\geq 1$, let
\begin{equation}\label{eq:def-Uk}
\mathcal U_k
:=
\left\{x+iy:1\leq x,y\leq p^k,
\text{ and }p\nmid(x,y)\right\},
\end{equation}
where $p\nmid(x,y)$ means that $x$ and $y$ are not simultaneously divisible by $p$. Thus $\mathcal U_k$ is a complete set of representatives for $(\Z[i]/p^k\Z[i])^{\times}$. For $j\geq 1$, put
\begin{equation}\label{eq:def-Hjk}
H_j(k):=\sum_{z\in\mathcal U_k}\frac{1}{z^j}\in\Op.
\end{equation}

\begin{lemma}\label{lem:power-sums}
Let $p>5$ and $p\equiv 3\pmod 4$. For every $k\geq 1$,
\begin{equation}\label{eq:H-valuations}
H_1(k)\equiv 0\pmod{p^{2k}},
\qquad
H_2(k)\equiv 0\pmod{p^k}.
\end{equation}
\end{lemma}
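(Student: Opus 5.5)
We use the group structure of $G_k:=(\Op/p^k\Op)^{\times}$, which is represented by $\mathcal U_k$. Because $z\mapsto z^{-1}$ permutes $G_k$, the sum $H_j(k)$ is congruent modulo $p^k$ to the power sum $\sum_{z\in\mathcal U_k}z^j$. This only gives information modulo $p^k$, so for the stronger bound on $H_1$ we pair terms exactly in $\Op$.

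\textbf{Step 1: $H_2(k)\equiv0\pmod{p^k}$.} Choose $u\in\Z[i]$ with $u^2\not\equiv1\pmod p$. Such a $u$ exists because $\F_{p^2}^{\times}$ has order $p^2-1>2$. Multiplication by $u$ permutes $\mathcal U_k$ modulo $p^k$. Hence
\[
u^{-2}H_2(k)\equiv H_2(k)\pmod{p^k}.
\]
Since $u^{-2}-1$ is a unit, this gives $H_2(k)\equiv0\pmod{p^k}$.

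\textbf{Step 2: $H_1(k)\equiv 0\pmod{p^{2k}}$.} Pair $z$ with $w:=p^k(1+i)-z$. The map $z\mapsto w$ is the reflection $(x,y)\mapsto(p^k-x,p^k-y)$. It sends $\mathcal U_k$ into the set of units represented by coordinates in $[0,p^k-1]^2$. Shifting a coordinate equal to $0$ up to $p^k$ changes the term $1/z$ only by a multiple of $p^k$, so this boundary correction needs separate care; see below. Apart from it, we get
\[
2H_1(k)=\sum_{z}\Bigl(\frac1z+\frac1{p^k(1+i)-z}\Bigr)
=\sum_z\frac{p^k(1+i)}{z\,(p^k(1+i)-z)}.
\]
Modulo $p^{2k}$ the right side is $\equiv -p^k(1+i)\sum_z z^{-2}=-p^k(1+i)H_2(k)$. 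By Step 1 this is $\equiv0\pmod{p^{2k}}$. Since $2$ is a unit, $H_1(k)\equiv0\pmod{p^{2k}}$.

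\textbf{Main obstacle: the boundary correction.} The exact reflection $(x,y)\mapsto(p^k-x,p^k-y)$ does not preserve the box $[1,p^k]^2$ at coordinates equal to $p^k$. Those points go to coordinate $0$, and the corresponding term shifts by $p^k$ in $x$ or $y$. To handle this cleanly I would use the reflection $(x,y)\mapsto(p^k+1-x,\,p^k+1-y)$, which is an exact involution of $[1,p^k]^2$ and preserves the condition $p\nmid(x,y)$ only up to the shift. Alternatively, I would split $\mathcal U_k$ into the interior $1\le x,y\le p^k-1$, which the reflection $z\mapsto p^k(1+i)-z$ preserves, and the two edges $x=p^k$ and $y=p^k$.

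On the edge $x=p^k$ the terms are $1/(p^k+iy)$ with $p\nmid y$. Pairing $y\leftrightarrow p^k-y$ gives sums of the form
\[
\sum\frac{1}{p^k+iy}+\frac{1}{p^k+i(p^k-y)}.
\]
Expanding in powers of $p^k$ should reduce these edge contributions to $p^k$ times a one-dimensional sum $\sum_{p\nmid y} y^{-2}$, plus $O(p^{2k})$. That one-dimensional sum is $\equiv0\pmod{p^k}$ by the classical argument for $p>3$. The edge $y=p^k$ is symmetric. Combining these with the interior argument of Step 2 gives $p^{2k}\mid H_1(k)$.

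I expect verifying these edge cancellations carefully, together with checking that the interior reflection argument goes through with the modified index set, to be the only delicate bookkeeping.
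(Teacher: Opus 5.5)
Your argument is correct, provided you use the second treatment of the boundary (interior plus two edges). Drop the first suggestion. The map $(x,y)\mapsto(p^k+1-x,p^k+1-y)$ does not preserve $p\nmid(x,y)$, and $z+w=(p^k+1)(1+i)$ is not divisible by $p^k$, so that pairing produces no factor $p^k$. Also, your display for $2H_1(k)$ is really $H_1(k)$ plus the reciprocal sum over the shifted box $[0,p^k-1]^2$.

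With $\mathcal U_k=I\cup E_1\cup E_2$, where $I$ is the part with $x,y\le p^k-1$ and $E_1,E_2$ are the edges $x=p^k$ and $y=p^k$, everything closes:
\begin{itemize}
\item The map $z\mapsto p^k(1+i)-z$ is an involution of $I$, so $2\sum_I z^{-1}\equiv-p^k(1+i)\sum_I z^{-2}\pmod{p^{2k}}$.
\item $\sum_I z^{-2}\equiv H_2(k)\equiv0\pmod{p^k}$. This holds because $E_1$ and $E_2$ contribute $-S$ and $+S$ to $H_2(k)$ modulo $p^k$, where $S=\sum_{1\le t<p^k,\,p\nmid t}t^{-2}$.
\item The edges of $H_1(k)$ contribute $(1-i)(T+p^kS)$ modulo $p^{2k}$, where $T=\sum_{1\le t<p^k,\,p\nmid t}t^{-1}\equiv-\tfrac12p^kS$. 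This vanishes because $S\equiv0\pmod{p^k}$ for $p>3$.
\end{itemize}

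This is a genuinely different route from the paper's. The paper proves $k=1$ by quoting Kalinin's Gaussian Wolstenholme theorem for the $(p-1)\times(p-1)$ block, together with the same edge expansion. That theorem is a $p^4$ statement, much more than the $p^2$ actually needed. The paper then climbs in $k$ through the digit decomposition $z=u+p^{k-1}w$ and the recursion \eqref{eq:H-recursion} in the moments $M_\ell$.

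Your proof is non-inductive and self-contained. Multiplicative invariance of $(\Z[i]/p^k\Z[i])^{\times}$ gives $H_2(k)$ at once. The additive reflection, which is the classical proof of Wolstenholme's $p^2$ congruence, gives $H_1(k)$. As a result, the proof of Theorem~\ref{thm:ljunggren} no longer needs \cite{Kalinin2025}.

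Your argument also uses nothing about $i$ beyond the inertness of $p$. If you replace $i$ by $\omega$ throughout (the edge constants become powers of $\omega$, a unit at $p$), the same computation gives the estimates of Conjecture~\ref{conj:omega-moments}.

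What the paper's recursion offers in exchange is an explicit lifting formula that displays the higher moments $H_{j+\ell}(k-1)$ and $M_\ell$. That is the natural starting point for the corrections beyond $p^{3k}$ discussed in Section~\ref{sec:conceptual}.
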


\begin{proof}
We first consider $k=1$. Since $p\equiv 3\pmod 4$, the congruence $x^2+y^2\equiv 0\pmod p$ forces $x\equiv y\equiv 0\pmod p$. Therefore
\begin{align*}
H_1(1)
&=
\sum_{x=1}^{p-1}\sum_{y=1}^{p-1}\frac{1}{x+iy}
+
\sum_{x=1}^{p-1}\frac{1}{x+ip}
+
\sum_{y=1}^{p-1}\frac{1}{p+iy}.
\end{align*}
Kalinin's Gaussian Wolstenholme theorem gives
\[
\sum_{x=1}^{p-1}\sum_{y=1}^{p-1}\frac{1}{x+iy}
\equiv 0\pmod{p^4}
\]
\cite[Theorem~1]{Kalinin2025}. For $1\leq x\leq p-1$,
\[
\frac{1}{x+ip}
\equiv
\frac1x-\frac{ip}{x^2}
\pmod{p^2}.
\]
The classical harmonic congruences
\[
\sum_{x=1}^{p-1}\frac1x\equiv0\pmod{p^2},
\qquad
\sum_{x=1}^{p-1}\frac1{x^2}\equiv0\pmod p
\]
therefore show that the second sum vanishes modulo $p^2$; the third is treated identically. Hence $H_1(1)\equiv0\pmod{p^2}$.

Reduction modulo $p$ identifies $\mathcal U_1$ with $\F_{p^2}^{\times}$. Since inversion permutes this group,
\[
H_2(1)
\equiv
\sum_{z\in\F_{p^2}^{\times}}z^{-2}
=
\sum_{z\in\F_{p^2}^{\times}}z^2
=0
\pmod p.
\]
This proves the base case.

Assume now that $k\geq 2$. Every $z\in\mathcal U_k$ has a unique expression
\[
z=u+p^{k-1}w,
\qquad
u\in\mathcal U_{k-1},
\qquad
w=a+ib,
\quad 0\leq a,b\leq p-1.
\]
Set
\[
M_\ell:=\sum_{a,b=0}^{p-1}(a+ib)^\ell.
\]
A direct calculation gives
\[
M_0=p^2,
\qquad
\vp(M_1),\vp(M_2),\vp(M_3)\geq 2.
\]
For example,
\[
M_1=\frac{p^2(p-1)}2(1+i),
\qquad
M_2=\frac{i p^2(p-1)^2}{2}.
\]
Expanding $p$-adically and summing over $u$ and $w$ yields
\begin{equation}\label{eq:H-recursion}
H_j(k)
=
\sum_{\ell\geq 0}
(-1)^\ell
\binom{j+\ell-1}{\ell}
 p^{(k-1)\ell}M_\ell H_{j+\ell}(k-1).
\end{equation}

For $j=2$, the term $\ell=0$ has valuation at least $2+(k-1)$. The term $\ell=1$ has valuation at least $(k-1)+2$, and every term with $\ell\geq2$ has valuation at least $2(k-1)\geq k$. Thus $\vp(H_2(k))\geq k$.

For $j=1$, the terms $\ell=0,1,2$ have valuations at least
\[
2+2(k-1),
\qquad
(k-1)+2+(k-1),
\qquad
2(k-1)+2,
\]
respectively, and hence are divisible by $p^{2k}$. If $k\geq3$, every term with $\ell\geq3$ contains $p^{\ell(k-1)}$ with $\ell(k-1)\geq2k$. If $k=2$, the term $\ell=3$ is divisible by $p^{3}M_3\subseteq p^5\Op$, while the terms with $\ell\geq4$ are automatically divisible by $p^4$. Consequently $\vp(H_1(k))\geq2k$, completing the induction.
\end{proof}

\begin{lemma}\label{lem:block-translation}
Let
\[
G_k:=\prod_{z\in\mathcal U_k}z.
\]
For every $t\in\Z[i]$,
\begin{equation}\label{eq:block-translation}
\prod_{z\in\mathcal U_k}(z+p^kt)
\equiv
G_k
\pmod{p^{3k}}.
\end{equation}
\end{lemma}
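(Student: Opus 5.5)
Divide by the unit $G_k$ and write
\[
\prod_{z\in\mathcal U_k}(z+p^kt)=G_k\prod_{z\in\mathcal U_k}\bigl(1+p^kt\,z^{-1}\bigr).
\]
Each $z\in\mathcal U_k$ is a unit in $\Op$ by \eqref{eq:valuation-coordinatewise}, so $G_k$ is a unit. It therefore suffices to show that the second product is $\equiv1\pmod{p^{3k}}$. Expanding it gives
\[
\sum_{n\geq0}p^{kn}t^n e_n\bigl(z^{-1}:z\in\mathcal U_k\bigr).
\]
Every term with $n\geq3$ is divisible by $p^{3k}$, since $e_n\in\Op$. So we only need $e_1\equiv0\pmod{p^{2k}}$ and $e_2\equiv0\pmod{p^{k}}$.

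For the first, $e_1=H_1(k)$, and Lemma~\ref{lem:power-sums} gives the required divisibility.

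For the second, use Newton's identity
\[
2e_2=e_1H_1(k)-H_2(k)=H_1(k)^2-H_2(k).
\]
Lemma~\ref{lem:power-sums} gives $\vp(H_1(k)^2)\geq4k$ and $\vp(H_2(k))\geq k$. Since $p$ is odd, $2$ is invertible, and so $\vp(e_2)\geq k$.

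Combining these, the $n=1$ term has valuation at least $k+2k$ and the $n=2$ term has valuation at least $2k+k$. Both are divisible by $p^{3k}$, which gives \eqref{eq:block-translation}. The only substantive input is Lemma~\ref{lem:power-sums}; the rest is bookkeeping.
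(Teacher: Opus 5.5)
Your proposal is correct and follows essentially the same route as the paper. Both arguments factor out the unit $G_k$, expand $\prod_{z}(1+p^kt z^{-1})$ in the elementary symmetric functions of the reciprocals, and combine $e_1=H_1(k)$ and $2e_2=H_1(k)^2-H_2(k)$ with Lemma~\ref{lem:power-sums}, while the terms with $n\geq3$ are automatically divisible by $p^{3k}$ because $e_n\in\Op$.
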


\begin{proof}
Because every $z\in\mathcal U_k$ is a unit in $\Op$,
\[
\frac{1}{G_k}\prod_{z\in\mathcal U_k}(z+p^kt)
=
\prod_{z\in\mathcal U_k}
\left(1+\frac{p^kt}{z}\right).
\]
Let $e_r$ be the $r$th elementary symmetric polynomial in the family $\{z^{-1}:z\in\mathcal U_k\}$. Then
\[
e_1=H_1(k),
\qquad
2e_2=H_1(k)^2-H_2(k).
\]
Lemma~\ref{lem:power-sums} gives $e_1\in p^{2k}\Op$ and $e_2\in p^k\Op$. Therefore
\begin{align*}
\prod_{z\in\mathcal U_k}
\left(1+\frac{p^kt}{z}\right)
&=
1+p^kt e_1+p^{2k}t^2e_2
+
\sum_{r\geq3}p^{kr}t^re_r
\equiv 1\pmod{p^{3k}},
\end{align*}
which proves \eqref{eq:block-translation}.
\end{proof}

For nonnegative integers $X,Y$, define the unit-factor product
\begin{equation}\label{eq:def-Fk}
F_k(X,Y)
:=
\prod_{\substack{1\leq x\leq p^kX\\1\leq y\leq p^kY\\p\nmid(x,y)}}(x+iy),
\end{equation}
with $F_k(0,Y)=F_k(X,0)=1$.

\begin{lemma}\label{lem:complete-rectangles}
For all $X,Y\geq0$,
\begin{equation}\label{eq:Fk-congruence}
F_k(X,Y)\equiv G_k^{XY}\pmod{p^{3k}}.
\end{equation}
\end{lemma}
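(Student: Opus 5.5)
We split the index rectangle $[1,p^kX]\times[1,p^kY]$ into the $XY$ complete blocks
\[
\mathcal B_{s,t}:=\{(x,y): sp^k<x\leq (s+1)p^k,\ tp^k<y\leq (t+1)p^k\},\qquad 0\leq s<X,\ 0\leq t<Y.
\]
The condition $p\nmid(x,y)$ depends only on $(x,y)$ modulo $p$, and hence only on the position inside the block. Consequently the unit points of $\mathcal B_{s,t}$ are exactly
\[
\{z+p^k(s+it): z\in\mathcal U_k\}.
\]
This gives the exact factorization
\[
F_k(X,Y)=\prod_{s=0}^{X-1}\prod_{t=0}^{Y-1}\ \prod_{z\in\mathcal U_k}\bigl(z+p^k(s+it)\bigr).
\]
In this product every $z\in\mathcal U_k$ corresponds to $x=\operatorname{Re}z+sp^k$ and $y=\operatorname{Im}z+tp^k$. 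Since $1\leq \operatorname{Re}z,\operatorname{Im}z\leq p^k$, these translates tile the rectangle bijectively.

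Next we apply Lemma~\ref{lem:block-translation} with $t=s+it\in\Z[i]$ to each block factor. This gives $\prod_{z\in\mathcal U_k}(z+p^k(s+it))\equiv G_k\pmod{p^{3k}}$. All factors are units in $\Op$, since each is congruent modulo $p$ to an element of $\mathcal U_k$ and so is nonzero modulo $p$. Congruences modulo the ideal $p^{3k}\Op$ multiply, so the product of the $XY$ block congruences yields $F_k(X,Y)\equiv G_k^{XY}\pmod{p^{3k}}$.

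The degenerate cases $X=0$ or $Y=0$ hold trivially, because both sides equal $1$. The only step needing care is the bookkeeping that the translated copies of $\mathcal U_k$ exactly exhaust the unit points of the big rectangle. That step is a direct check, so there is no real obstacle: the substantive input is already contained in Lemma~\ref{lem:block-translation}.
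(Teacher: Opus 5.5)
Your proof is correct and follows the paper's argument: tile $[1,p^kX]\times[1,p^kY]$ by the $XY$ translated $p^k\times p^k$ blocks, identify the unit factors of each block with $\{z+p^k(s+it):z\in\mathcal U_k\}$, and multiply the congruences supplied by Lemma~\ref{lem:block-translation}. Two cosmetic points: you reuse $t$ both as a block index and as the translation parameter of that lemma, and the remark that all factors are units is unnecessary, since congruences modulo the ideal $p^{3k}\Op$ multiply in any ring.
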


\begin{proof}
Partition $[1,p^kX]\times[1,p^kY]$ into $XY$ blocks of size $p^k\times p^k$. The block indexed by $0\leq r<X$ and $0\leq s<Y$ has unit-factor product
\[
\prod_{z\in\mathcal U_k}\bigl(z+p^k(r+is)\bigr),
\]
which is congruent to $G_k$ modulo $p^{3k}$ by Lemma~\ref{lem:block-translation}. Multiplication over all blocks gives \eqref{eq:Fk-congruence}.
\end{proof}

\begin{proof}[Proof of Theorem~\ref{thm:ljunggren}]
Set
\[
R_k:=\Rect(p^kA,p^kB;p^kC,p^kD).
\]
Since $p$ is inert,
\[
p\mid x+iy
\quad\Longleftrightarrow\quad
p\mid x\text{ and }p\mid y.
\]
Separate in the numerator and denominator of $R_k$ the factors divisible by $p$. Dividing both coordinates of each such factor by $p$ produces precisely the numerator and denominator of $R_{k-1}$, and the same number of scalar factors $p$ occurs above and below. Hence
\begin{equation}\label{eq:R-ratio-units}
\frac{R_k}{R_{k-1}}
=
\frac{F_k(A,B)F_k(A-C,B-D)}
{F_k(A-C,B)F_k(A,B-D)F_k(C,D)}.
\end{equation}
The right-hand side is a unit of $\Op$. Applying Lemma~\ref{lem:complete-rectangles} to each factor gives
\[
\frac{R_k}{R_{k-1}}
\equiv
G_k^E
\pmod{p^{3k}},
\]
where
\begin{align*}
E
&=AB+(A-C)(B-D)-(A-C)B-A(B-D)-CD
=0.
\end{align*}
Thus $R_k/R_{k-1}\equiv1\pmod{p^{3k}}$. By Lemma~\ref{lem:integrality}, $R_{k-1}\in\Op$, so
\[
R_k-R_{k-1}
=R_{k-1}\left(\frac{R_k}{R_{k-1}}-1\right)
\in p^{3k}\Op.
\]
This is exactly \eqref{eq:main-congruence}.
\end{proof}

\section{The Bailey-type congruence}\label{sec:bailey-proof}

For nonnegative integers $M,N$ and digits $0\leq a,b\leq p-1$, define
\begin{equation}\label{eq:def-unit-product}
\mathcal V_p(M,N;a,b)
:=
\prod_{\substack{1\leq x\leq pM+a\\1\leq y\leq pN+b\\p\nmid(x,y)}}(x+iy).
\end{equation}
The nonunit factors of $\Prect(pM+a,pN+b)$ are exactly the numbers $p(x+iy)$ with $1\leq x\leq M$ and $1\leq y\leq N$. Therefore
\begin{equation}\label{eq:P-splitting}
\Prect(pM+a,pN+b)
=p^{MN}\Prect(M,N)\mathcal V_p(M,N;a,b).
\end{equation}

\begin{lemma}\label{lem:unit-product-mod-p}
For $0\leq a,b\leq p-1$,
\begin{equation}\label{eq:unit-product-mod-p}
\mathcal V_p(M,N;a,b)
\equiv
(-1)^{MN}X_a^N Y_b^M\Prect(a,b)
\pmod p,
\end{equation}
where
\[
X_a:=2^a a!,
\qquad
Y_b:=(-2i)^b b!.
\]
\end{lemma}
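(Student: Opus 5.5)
The plan is to split the index region $[1,pM+a]\times[1,pN+b]$ into four pieces and to evaluate the unit factors of each piece modulo $p$. The pieces are: the complete block region $[1,pM]\times[1,pN]$; the vertical strip $[pM+1,pM+a]\times[1,pN]$; the horizontal strip $[1,pM]\times[pN+1,pN+b]$; and the corner $[pM+1,pM+a]\times[pN+1,pN+b]$. Modulo $p$, the factor $x+iy$ depends only on $(x\bmod p,\,y\bmod p)$. The unit condition $p\nmid(x,y)$ also depends only on these residues.

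\emph{The complete block region.} It consists of $MN$ translates of $\mathcal U_1$, so its unit product is $\equiv G_1^{MN}\pmod p$. Here $G_1\equiv\prod_{z\in\F_{p^2}^\times}z=-1$, by Wilson's theorem in the finite field $\F_{p^2}$. This produces the factor $(-1)^{MN}$.

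\emph{The corner.} Every point $(pM+u,\,pN+v)$ with $1\le u\le a$ and $1\le v\le b$ is a unit. Its factor is $\equiv u+iv$, so the corner contributes $\Prect(a,b)$.

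\emph{The vertical strip.} It consists of $N$ vertical translates of the column set $\{(u,v):1\le u\le a,\ 0\le v\le p-1\}$; all of these points are units since $u\not\equiv0$. Hence its product is $\equiv X_a'^{\,N}$, where
\[
X_a'=\prod_{u=1}^{a}\prod_{v\in\F_p}(u+iv).
\]
For fixed $u\in\F_p^\times$, I would compute $\prod_{v\in\F_p}(u+iv)$ by writing $u+iv=i(v-iu)$. This gives
\[
i^p\prod_{v}(v-iu)=i^p\bigl((iu)^p-iu\bigr)\cdot(\pm1),
\]
using $\prod_{v\in\F_p}(T-v)=T^p-T$ evaluated at $T=iu$ with the sign adjusted. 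Since $i^p=-i$ for $p\equiv3\pmod4$ and $u^p=u$, the quantity $(iu)^p-iu$ equals $-2iu$, and the product should come out to $2u$. (I would check the sign carefully; the target identity $X_a=2^aa!$ requires the per-column product to be exactly $2u$.) Multiplying over $u=1,\dots,a$ then gives $2^aa!$.

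\emph{The horizontal strip.} The same computation applies with the roles swapped. For fixed $v\in\F_p^\times$,
\[
\prod_{u\in\F_p}(u+iv)=(-iv)^p-(-iv)\cdot(\text{sign}),
\]
obtained from $\prod_{u}(u-c)=c^p-c$ with $c=-iv$. This equals $-2iv$ up to the same sign bookkeeping. Taking the product over $v=1,\dots,b$ gives $(-2i)^bb!=Y_b$, and the $M$ horizontal translates give $Y_b^M$.

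Multiplying the four contributions yields \eqref{eq:unit-product-mod-p}. The main obstacle I expect is the sign and unit bookkeeping in the two one-dimensional Frobenius evaluations. The sign of $\prod_{v}(v-c)$ relative to $\prod_v(c-v)$ is $(-1)^p=-1$, and $i^p=-i$. I would therefore verify both strip formulas directly for $p=3$ and $a=b=1$ before finalizing.
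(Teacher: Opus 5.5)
Your proposal is correct and is essentially the paper's proof: the same decomposition into $MN$ complete $p\times p$ blocks (product $-1$ by Wilson's theorem in $\F_{p^2}$), $N$ vertical and $M$ horizontal boundary strips evaluated via $\prod_{t\in\F_p}(T-t)=T^p-T$ and $i^p=-i$, and the $a\times b$ corner giving $\Prect(a,b)$. The sign you left open is already fixed by the facts you cite: $\prod_{v\in\F_p}(u+iv)=i^p\cdot(-1)\bigl((iu)^p-iu\bigr)=(-i)(2iu)=2u$ and $\prod_{u\in\F_p}(u+iv)=-\bigl((-iv)^p+iv\bigr)=-2iv$, which match $X_a=2^aa!$ and $Y_b=(-2i)^bb!$.
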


\begin{proof}
Partition the rectangle $[1,pM+a]\times[1,pN+b]$ into $MN$ complete $p\times p$ blocks, $N$ vertical boundary strips of width $a$, $M$ horizontal boundary strips of height $b$, and one $a\times b$ corner.

The unit factors in each complete block reduce to all elements of $\F_{p^2}^{\times}$, whose product is $-1$. For $x,y\in\F_p^{\times}$, the identity $T^p-T=\prod_{t\in\F_p}(T-t)$ and the relation $i^p=-i$ give
\begin{equation}\label{eq:strip-products}
\prod_{t\in\F_p}(x+it)=2x,
\qquad
\prod_{t\in\F_p}(t+iy)=-2iy.
\end{equation}
Thus the vertical strips contribute $X_a^N$, the horizontal strips contribute $Y_b^M$, and the corner contributes $\Prect(a,b)$. Multiplying these contributions proves \eqref{eq:unit-product-mod-p}.
\end{proof}

\begin{proof}[Proof of Theorem~\ref{thm:bailey}]
By \eqref{eq:R-via-P},
\begin{align}\label{eq:bailey-five-P}
&\Rect(pA+\alpha,pB+\beta;pC+\gamma,pD+\delta)\notag\\
&=
\frac{\Prect(pA+\alpha,pB+\beta)}
{\Prect(p(A-C)+\alpha-\gamma,pB+\beta)}\notag\\
&\qquad\times
\frac{\Prect(p(A-C)+\alpha-\gamma,p(B-D)+\beta-\delta)}
{\Prect(pA+\alpha,p(B-D)+\beta-\delta)\Prect(pC+\gamma,pD+\delta)}.
\end{align}
Apply \eqref{eq:P-splitting} to all five factors. The total exponent of $p$ is
\[
AB+(A-C)(B-D)-(A-C)B-A(B-D)-CD=0,
\]
so all powers of $p$ cancel. The coarse rectangular factorials combine to $\Rect(A,B;C,D)$, while the corner factors combine to $\Rect(\alpha,\beta;\gamma,\delta)$.

It remains to evaluate the complete-block signs and strip factors using Lemma~\ref{lem:unit-product-mod-p}. The signs cancel because their total exponent is again zero. The vertical-strip factors contribute
\begin{align*}
\frac{X_\alpha^B X_{\alpha-\gamma}^{B-D}}
{X_{\alpha-\gamma}^{B}X_\alpha^{B-D}X_\gamma^D}
&=
\left(\frac{X_\alpha}{X_{\alpha-\gamma}X_\gamma}\right)^D
=
\binom{\alpha}{\gamma}^{D},
\end{align*}
and the horizontal-strip factors contribute
\begin{align*}
\frac{Y_\beta^A Y_{\beta-\delta}^{A-C}}
{Y_\beta^{A-C}Y_{\beta-\delta}^{A}Y_\delta^C}
&=
\left(\frac{Y_\beta}{Y_{\beta-\delta}Y_\delta}\right)^C
=
\binom{\beta}{\delta}^{C}.
\end{align*}
Combining all contributions yields \eqref{eq:bailey-congruence}.
\end{proof}

\section{Conceptual explanation}\label{sec:conceptual}

The arithmetic developed above belongs to a classical--Gaussian dictionary \cite{Katz1977,Coffey2018,Hurwitz1899Lemniskatische}  . Each row of Table~\ref{tab:dictionary} records one stage of the same passage: an ambient lattice determines a symmetry; the symmetry selects the reciprocal moments that can survive; those moments occur in the expansions of natural special functions; their finite analogues govern translated products; and the resulting product identities become congruences for factorial ratios. In the Gaussian column, the lemniscatic functions add a further layer: the square lattice is the period lattice of an elliptic curve with complex multiplication by $\Z[i]$, and its formal group, torsion points, and reductions modulo primes organize the same reciprocal-power sums that enter the block arguments above. The purpose of this section is to explain the two columns separately and in the order in which the rows appear.

\begin{table}[ht]
\centering
\small
\renewcommand{\arraystretch}{1.14}
\setlength{\tabcolsep}{6pt}
\begin{tabularx}{\textwidth}{>{\raggedright\arraybackslash}X!{\vrule width .35pt}>{\raggedright\arraybackslash}X}
\toprule
\textbf{Classical integer theory} & \textbf{Gaussian integer theory}\\
\midrule
The lattice $\Z$ & The square lattice $\Z[i]$\\
Reflection symmetry $m\mapsto-m$ & Quarter-turn symmetry $\alpha\mapsto i\alpha$\\
Circular sine $\sin z$ & Lemniscatic sine $\slf(z)$\\
$\pi\cot(\pi z)$ and $\pi^2\csc^2(\pi z)$ & $\zeta_{\omega\Z[i]}(z)$ and $\wp(z;\omega\Z[i])$\\
$\displaystyle\sum_{m\ne0}m^{-2n}$ & $\displaystyle\sum_{\alpha\ne0}\alpha^{-4n}$\\
Bernoulli numbers $B_{2n}$ & Bernoulli--Hurwitz numbers $E_n$\\
Finite sums $\displaystyle\sum a^{-r}$ over $\F_p^{\times}$ & Finite sums $\displaystyle\sum z^{-r}$ over $\F_{p^2}^{\times}$ for inert $p$\\
The multiplicative group and its formal coordinate & The CM elliptic curve $y^2=x^3-x$ and its formal group\\
Translated intervals of units & Translated square blocks of Gaussian units\\
Ordinary factorial ratios $\binom NK$ & Rectangular ratios $\Rect(A,B;C,D)$\\
Lucas, Bailey, Ljunggren--Jacobsthal congruences & Rectangular Bailey-type and Ljunggren--Jacobsthal congruences\\
\bottomrule
\end{tabularx}
\caption{The classical--Gaussian correspondence.}
\label{tab:dictionary}
\end{table}

\subsection{The classical case}\label{subsec:classical-column}

\emph{The lattice.}
The first row places the classical theory on the one-dimensional lattice $\Z$. Products of consecutive integers, ordinary factorials, and binomial coefficients are all built from finite portions of this lattice. The analytic functions in the subsequent rows are therefore chosen so that their zeros or poles reproduce the same set of integer points.

\emph{Reflection symmetry.}
The map $m\mapsto-m$ preserves $\Z$. Consequently, for every absolutely convergent reciprocal sum,
\[
\sum_{m\in\Z\setminus\{0\}}m^{-r}
=(-1)^r\sum_{m\in\Z\setminus\{0\}}m^{-r}.
\]
Thus the sum vanishes when $r$ is odd. This order-two symmetry explains why the natural global moments in the classical column have exponent $2n$.

\emph{The circular sine.}
After the usual normalization, the zero set of $\sin(\pi z)$ is exactly $\Z$. Its product expansion packages all nonzero lattice points into one analytic object, just as a factorial packages a finite interval of integers into one product.

\emph{The cotangent and cosecant-square functions.}
Taking logarithmic derivatives gives
\[
\frac{d}{dz}\log\sin(\pi z)=\pi\cot(\pi z),
\qquad
-\frac{d}{dz}\bigl(\pi\cot(\pi z)\bigr)=\pi^2\csc^2(\pi z).
\]
The first function has simple poles at the integers, and the second has double poles. Their Laurent expansions therefore convert the geometry of the lattice into reciprocal-power sums.

\emph{Infinite reciprocal moments and Bernoulli numbers.}
The cotangent expansion gives
\[
\pi\cot(\pi z)
=
\frac1z-2\sum_{n\geq1}\zeta(2n)z^{2n-1}.
\]
Equivalently,
\[
\sum_{m\in\Z\setminus\{0\}}\frac1{m^{2n}}
=
(-1)^{n+1}\frac{(2\pi)^{2n}}{(2n)!}B_{2n}.
\]
Thus $B_{2n}$ is the normalized reciprocal moment of order $2n$ of the lattice $\Z$.

\emph{Finite reciprocal sums.}
The finite counterpart is obtained by replacing the nonzero integers with the nonzero residue classes modulo a prime. For $r\geq1$, put
\[
H_r^{\mathrm{cl}}(p)
=
\sum_{a\in\F_p^\times}a^{-r}.
\]
Since $\F_p^\times$ is cyclic of order $p-1$, this sum vanishes modulo $p$ unless $p-1$ divides $r$. Stronger $p$-adic estimates for the first moments are the finite arithmetic form of the cancellations visible in the global lattice sums.

\emph{Translated intervals and factorial ratios.}
If $U_k^{\mathrm{cl}}$ is a complete set of unit representatives modulo $p^k$, then
\[
\frac{\prod_{a\in U_k^{\mathrm{cl}}}(a+p^kt)}
{\prod_{a\in U_k^{\mathrm{cl}}}a}
=
\prod_{a\in U_k^{\mathrm{cl}}}
\left(1+\frac{p^kt}{a}\right).
\]
The coefficients of this product are elementary symmetric functions of the reciprocals $a^{-1}$, and Newton's identities express them through the finite moments. Their divisibility forces translated unit intervals to have the same product modulo a high power of $p$.

An ordinary binomial coefficient is a quotient of interval products. After the factors divisible by $p$ are separated, the remaining factors split into translated intervals of units, and their common block products cancel between numerator and denominator. This is the product-theoretic reason that local moment estimates pass naturally to binomial coefficients and produce the congruences of Lucas, Bailey, and Ljunggren--Jacobsthal.

\subsection{The Gaussian case}\label{subsec:gaussian-column}

\emph{The square lattice.}
The Gaussian column replaces the line $\Z$ with the square lattice $\Z[i]$. A finite rectangular region consists of the numbers $x+iy$ with $(x,y)$ in a rectangle of integer points. The rectangular factorials and the coefficient $\Rect(A,B;C,D)$ are obtained by multiplying over such two-dimensional regions, so $\Z[i]$ is the natural ambient lattice for the present paper.

\emph{Quarter-turn symmetry.}
Multiplication by $i$ preserves $\Z[i]$ and rotates it through a quarter turn. Hence, for an absolutely convergent Gaussian reciprocal sum,
\[
\sum_{\alpha\in\Z[i]\setminus\{0\}}\alpha^{-r}
=
i^{-r}\sum_{\alpha\in\Z[i]\setminus\{0\}}\alpha^{-r}.
\]
The sum can be nonzero only when $4\mid r$. The passage from the classical exponents $2n$ to the Gaussian exponents $4n$ is therefore dictated by the enlargement of the symmetry group from $\{\pm1\}$ to $\{\pm1,\pm i\}$.

\emph{The lemniscatic sine and the square-lattice elliptic curve.}
Put
\[
\omega=2\int_0^1\frac{dt}{\sqrt{1-t^4}},
\qquad
\Lambda=\omega\Z[i].
\]
The lemniscatic sine is characterized near the origin by
\[
\slf(z)=z+O(z^5),
\qquad
\slf'(z)^2=1-\slf(z)^4.
\]
For the square lattice, the normalized Weierstrass function satisfies
\[
\wp'(z)^2=4\wp(z)^3-4\wp(z),
\qquad
\wp(z)=\slf(z)^{-2}.
\]
The corresponding cubic curve is
\[
E:y^2=4x^3-4x.
\]
After the harmless change $y\mapsto y/2$, one obtains
\[
E_0:y^2=x^3-x,
\qquad
j(E_0)=1728,
\qquad
\operatorname{End}_{\mathbb C}(E_0)\simeq\Z[i].
\]
Thus the lemniscatic sine is not only a two-dimensional analogue of the circular sine: it is a local coordinate for the elliptic curve whose complex-multiplication ring is the same Gaussian lattice that underlies the rectangular products.

\emph{The Weierstrass functions and Gaussian reciprocal moments.}
The exact pole-order correspondence is
\[
\pi\cot(\pi z)\longleftrightarrow\zeta_\Lambda(z),
\qquad
\pi^2\csc^2(\pi z)\longleftrightarrow\wp(z;\Lambda).
\]
The zeta function has simple poles at the lattice points, while $\wp=-\zeta_\Lambda'$ has double poles. The Laurent expansion of $\wp$ contains the reciprocal moments of the nonzero Gaussian lattice points. Quarter-turn symmetry removes every exponent not divisible by four, leaving
\[
\sum_{\alpha\in\Z[i]\setminus\{0\}}\frac1{\alpha^{4n}}.
\]

\emph{Bernoulli--Hurwitz numbers.}
The surviving moments are normalized by the Bernoulli--Hurwitz numbers $E_n$ through
\[
\sum_{\alpha\in\Z[i]\setminus\{0\}}
\frac1{\alpha^{4n}}
=
\frac{(2\omega)^{4n}}{(4n)!}E_n,
\qquad n\geq1.
\]
Equivalently, the Laurent coefficients of the square-lattice $\wp$ function package the sequence $E_n$. They therefore occupy in the Gaussian dictionary exactly the position held by the ordinary Bernoulli numbers in the classical column. From the modern viewpoint, they are normalized Eisenstein values at the CM point $\tau=i$ .

\emph{Formal groups and complex multiplication.}
Writing $\phi(u)=\slf(u)$, the inverse lemniscatic integral is the formal logarithm of $E_0$ in the quartic coordinate, and $\phi$ is the corresponding formal exponential. If $m\in\Z[i]$ is odd and primary, the CM endomorphism $[m]$ is represented in this coordinate by a rational multiplication formula
\[
\phi(mu)=\frac{U_m(\phi(u))}{V_m(\phi(u))},
\qquad
U_m,V_m\in\Z[i][x],
\qquad
\deg[m]=N(m).
\]
The divisibility of the coefficients in these formulas is the formal-coordinate shadow of the invariant-differential identity
\[
[m]^*\omega_E=m\omega_E.
\]
This explains why the Gaussian reciprocal sums appearing in finite block calculations are naturally connected with the endomorphism theory of a CM elliptic curve, rather than being merely formal two-dimensional analogues of ordinary harmonic sums 

\emph{Finite Gaussian reciprocal sums.}
Let $\pi\in\Z[i]$ be a Gaussian prime and let $q=N(\pi)$. Since
\[
\Z[i]/(\pi)\simeq\F_q,
\]
its nonzero elements form a cyclic group of order $q-1$, and hence
\[
\sum_{x\in(\Z[i]/(\pi))^\times}x^{-m}
=
\begin{cases}
-1,&q-1\mid m,\\
0,&q-1\nmid m.
\end{cases}
\]
For a rational prime $p\equiv3\pmod4$, the prime is inert and
\[
\Z[i]/(p)\simeq\F_{p^2}.
\]
Thus complete reciprocal sums vanish modulo $p$ unless $p^2-1$ divides the exponent. The prime-power sums used above are
\[
H_j(k)=\sum_{z\in U_k}\frac1{z^j},
\qquad
U_k=\{x+iy:1\leq x,y\leq p^k,\ p\nmid(x,y)\}.
\]
The estimates
\[
H_1(k)\in p^{2k}\Op,
\qquad
H_2(k)\in p^k\Op
\]
are the Gaussian analogues of the low classical harmonic-sum congruences.

When $p\equiv1\pmod4$, one has $p=\pi\bar\pi$ and
\[
\Z[i]/(p)\simeq\F_p\times\F_p.
\]
This ring is not a field. Inverse powers are defined only on its unit group, and the two local directions above $p$ must be kept separate. This is the residue-theoretic origin of the difference between split and inert primes.

\emph{Division points and exact finite sums.}
The finite-field argument determines complete reciprocal sums modulo a Gaussian prime, but by itself it does not determine congruences modulo $\pi^2,\pi^3,\ldots$, nor sums over restricted systems of representatives. The elliptic-function viewpoint supplies additional structure. For $\alpha\in\Z[i]/p\Z[i]$, put
\[
z_\alpha=\frac{\omega\alpha}{p}.
\]
These are the $p$-division points of the square-lattice torus. Since $\wp=\slf^{-2}$,
\[
\sum_{\alpha\ne0}\wp(z_\alpha)^r
=
\sum_{\alpha\ne0}\frac1{\slf(z_\alpha)^{2r}}.
\]
The multiplication identity
\[
p^2\wp(pz)=\sum_{u\in E[p]}\wp(z+u)-C_p
\]
can be expanded at the origin to obtain exact identities for the torsion power sums. Equivalently, the coefficients of the $p$-division polynomial and Newton's identities determine the sums of powers of the $x$-coordinates $x(u)=\wp(u)$. Because the Laurent coefficients of $\wp$ are the Bernoulli--Hurwitz numbers, these torsion identities provide a systematic source of exact Gaussian reciprocal-power formulas before modular reduction. 

\emph{Frobenius, the Hasse invariant, and reduction type.}
The same elliptic curve explains geometrically the split--inert distinction. If $p=\pi\bar\pi\equiv1\pmod4$ and $\pi=a+ib$ is chosen primary, the curve $E_0$ has ordinary reduction and compatible Frobenius trace
\[
a_p(E_0)=p+1-\#E_0(\F_p)=2a.
\]
The scalar $2a$ is also the Hasse-invariant coefficient in the lemniscatic formal coordinate and the multiplier in Hurwitz's coefficient recurrences. If $p\equiv3\pmod4$, the curve has supersingular reduction and the Hasse invariant vanishes. Thus
\[
\begin{array}{c|c|c}
p\bmod4 & \text{reduction of }E_0 & \text{lemniscatic coefficient behavior}\\ \hline
1 & \text{ordinary} & \text{recurrence with multiplier }2a\\
3 & \text{supersingular} & \text{eventual vanishing modulo }p
\end{array}
\]
This gives a geometric refinement of the ring-theoretic distinction between $\F_{p^2}$ and $\F_p\times\F_p$ 

\emph{Translated square blocks.}
A complete $p^k\times p^k$ block of Gaussian units has product
\[
G_k=\prod_{z\in U_k}z.
\]
For $t\in\Z[i]$, translation by $p^kt$ gives
\[
\frac1{G_k}\prod_{z\in U_k}(z+p^kt)
=
\prod_{z\in U_k}
\left(1+\frac{p^kt}{z}\right).
\]
Its elementary symmetric coefficients are controlled by the moments $H_j(k)$. The first two estimates imply that the translated product is congruent to $G_k$ modulo $p^{3k}$. This is the two-dimensional block invariance proved in Lemma~\ref{lem:block-translation}.

The lemniscatic interpretation places this calculation in a broader hierarchy. Complete residue sums are governed at first order by cyclic finite-field groups. Higher congruences can in principle be approached through lifted reciprocal sums, multiplication formulas, torsion points, division polynomials, and Bernoulli--Hurwitz coefficients. The present theorem, however, uses only the divisibility of $H_1(k)$ and $H_2(k)$; it does not require an explicit Bernoulli--Hurwitz correction term.

\emph{Rectangular ratios and congruences.}
The coefficient $\Rect(A,B;C,D)$ compares a translated $C\times D$ rectangle with the basic $C\times D$ rectangle. In terms of rectangular factorials it is assembled by multiplicative inclusion--exclusion from five rectangles. Under simultaneous dilation by $p^k$, every large rectangle decomposes into complete $p^k\times p^k$ unit blocks together with the factors divisible by $p$. The latter reproduce the preceding scale, while the complete-block constants cancel because
\[
AB+(A-C)(B-D)-(A-C)B-A(B-D)-CD=0.
\]
Consequently,
\[
\Rect(p^kA,p^kB;p^kC,p^kD)
\equiv
\Rect(p^{k-1}A,p^{k-1}B;p^{k-1}C,p^{k-1}D)
\pmod{p^{3k}}.
\]
The Bailey-type congruence similarly separates a large $p$-block from a residual digit rectangle, while the vertical and horizontal boundary strips produce the two additional ordinary binomial factors.

The Gaussian column may therefore be summarized by the chain
\[
\begin{gathered}
\Z[i]
\longrightarrow
\slf,\ \zeta_\Lambda,\ \wp_\Lambda
\longrightarrow
E_n\\
\Downarrow\\[-2pt]
\text{finite Gaussian reciprocal moments}
\longrightarrow
\text{translated square-block invariance}\\
\Downarrow\\[-2pt]
\text{rectangular Bailey-type and}\\[-2pt]
\text{Ljunggren--Jacobsthal congruences}.
\end{gathered}
\]
The square lattice supplies the symmetry, the CM elliptic curve packages the global moments, its reductions and torsion organize the finite arithmetic, and rectangular inclusion--exclusion converts the resulting block identities into congruences for factorial ratios.

\emph{Higher congruences.}
The lemniscatic theory suggests a systematic direction beyond the modulus proved here. For lifts of reciprocal powers one may expand
\[
(a+bi+p\beta)^{-m}
=
(a+bi)^{-m}
\sum_{j\geq0}
\binom{-m}{j}
\left(\frac{p\beta}{a+bi}\right)^j.
\]
After summation, each coefficient is a higher Gaussian reciprocal-power sum. Finite-field cancellation removes most exponents, while the surviving terms may be studied through multiplication formulas or division polynomials, with Bernoulli--Hurwitz numbers supplying the corresponding Laurent coefficients. This indicates a possible route toward the first correction beyond $p^{3k}$ and toward higher-power lifts of the rectangular Bailey congruence. These refinements are not proved in the present paper; they constitute a natural continuation of the block method developed above.

\section{Consequences and further directions}\label{sec:consequences}

By Theorem \ref{thm:ljunggren} we corrected and proved the third conjecture proposed in \cite{Kalinin2025}

\begin{proposition}
    Let $p$ be a rational prime. Then$$\Rect(pA,pB;pC,pD) \equiv \Rect(A,B;C,D) \pmod{p^{3}}$$for every$$A\geq C\geq1, \qquad B\geq D\geq1,$$if and only if$$p>5, \qquad p\equiv3\pmod4.$$
\end{proposition}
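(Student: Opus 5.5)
The ``if'' direction is the case $k=1$ of Theorem~\ref{thm:ljunggren}: for $p>5$ with $p\equiv3\pmod4$, \eqref{eq:main-congruence} with $k=1$ reads $\Rect(pA,pB;pC,pD)\equiv\Rect(A,B;C,D)\pmod{p^3}$ in $\Op$. So the work is entirely in the ``only if'' direction. For that I will produce, for each excluded prime, one explicit quadruple $(A,B,C,D)$ for which the congruence fails. Throughout, ``$\equiv\pmod{p^3}$'' is read as in Section~\ref{sec:preliminaries}: the difference lies in $p^3$ times the $p$-adic completion of $\Z[i]$. For split $p$ this completion is $\Z_p\times\Z_p$ via the two embeddings $i\mapsto\pm\sqrt{-1}\in\Z_p$.

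\emph{Excluded primes $p\in\{2,3,5\}$.} Here I would use the smallest nontrivial quadruple, $C=D=1$, for which
\[
\Rect(A,B;1,1)=A+Bi
\quad\text{and}\quad
\Rect(pA,pB;p,p)=\frac{\prod_{x=pA-p+1}^{pA}\prod_{y=pB-p+1}^{pB}(x+iy)}{\Prect(p,p)} .
\]
Taking $A=2$, $B=1$ (or $A=B=2$ if needed), both sides are explicit Gaussian rationals of small height. I would evaluate them exactly and check that the difference is not divisible by $p^3$ (respectively not $p$-integral, or of the wrong valuation, in the split case $p=5$).

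The conceptual reason to expect failure for $p=3$ (and $p=2$) is that the proof of Lemma~\ref{lem:power-sums} breaks down. Either the harmonic input $\sum_{x=1}^{p-1}1/x\equiv0\pmod{p^2}$ fails, or the Newton step needs $2$ to be invertible. Consequently $H_1(1)$ is only divisible by a lower power of $p$, and the block translation of Lemma~\ref{lem:block-translation} holds only modulo $p^2$. Since this is only heuristic, the explicit computation is what settles these cases.

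\emph{Split primes $p\equiv1\pmod4$.} Write $p=\pi\bar\pi$ and choose $A,B\geq1$ with $A^2+B^2=p$, so that $A+Bi$ is an associate of $\pi$ (or of $\bar\pi$); take $C=D=1$. The right-hand side $A+Bi$ then has $\pi$-adic valuation exactly $1$ and $\bar\pi$-adic valuation $0$. For the left-hand side I would count, in each $p\times p$ box, the elements divisible by $\pi^r$. Any $p\times p$ box of lattice points is a complete residue system for $\Z[i]/p\Z[i]$, so the counts for $r=1$ (namely $p$) and for divisibility by $p$ (namely $1$) agree between the translated box and the base box. The difference between the two sides must therefore come from higher powers $\pi^r$ with $r\geq2$ and from the unit parts.

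The cleanest route is to compare the two sides under the embedding $\iota$ that sends $\pi\mapsto0\pmod{\pi}$. I would compute $\iota(\Rect(pA,pB;p,p))$ modulo $p$ via a strip/block evaluation in the spirit of Lemma~\ref{lem:unit-product-mod-p}, but over $\F_p\times\F_p$, and show that it is a $\pi$-adic unit or has valuation different from $1$. That already contradicts a congruence modulo $p^3$ with a quantity of exact $\pi$-valuation $1$.

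\emph{Main obstacle.} I expect the main obstacle to be the split case. The block evaluation modulo $p$ must be redone in $\F_p\times\F_p$, where a box is no longer a product of copies of a field's unit group, and the valuation bookkeeping for $\pi^2$-multiples inside a $p\times p$ box is not translation-invariant. If a uniform argument proves too delicate, I would fall back on exhibiting a single explicit counterexample for each residue class. However, ``only if'' genuinely requires every split prime, so the first thing I would try is the valuation-mismatch argument with $A+Bi=\pi$ described above.
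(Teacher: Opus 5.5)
Your ``if'' direction and your plan to handle $p=2,3$ by explicit counterexamples match the paper, but you only announce those computations. Note also that $\Rect(A,B;1,1)=(A+Bi)/(1+i)$, not $A+Bi$. This is harmless for valuations at odd $p$, but it changes the arithmetic at $p=2$. With your choice $A=2$, $B=1$ the computations do succeed. For $p=2$, $\Rect(4,2;2,2)-\Rect(2,1;1,1)=\frac{1-21i}{2}-\frac{3-i}{2}=-1-10i$, which is a $2$-adic unit. For $p=3$, $\Rect(6,3;3,3)-\Rect(2,1;1,1)=\frac{-7893+4365i}{26}$, which has $3$-adic valuation $2$.

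The genuine gap is the split case, the only part that must cover infinitely many primes. Your key step, that for $A+Bi=\pi$ the left side has $\pi$-adic valuation $\neq 1$, is false: the valuation is exactly $1$.

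\emph{Denominator.} The base box $Q_p=[1,p]^2$ contains $p$ multiples of $\pi$. Since $p^2\mid N(z)\le 2p^2$ for any $\pi^2$-multiple $z$, exactly one of them is divisible by $\pi^2$, namely the associate of $\pi^2$ in the open first quadrant. So the denominator has $v_\pi=p+1$.

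\emph{Numerator.} Your numerator box is $Q_p+p\tau$ with $\tau=\pi-(1+i)$. Since $\pi^2\mid p\pi$, the element $z+p\tau$ is divisible by $\pi^2$ iff $\pi^2\mid z-p(1+i)$. The same norm count gives exactly two such $z$, producing $p\pi$ and $p\pi+\epsilon\pi^2$ (where $\epsilon\pi^2\in[-(p-1),-1]^2$), each of valuation exactly $2$. So the numerator has $v_\pi=p+2$, and $v_\pi(\Rect(pA,pB;p,p))=1=v_\pi(\Rect(A,B;1,1))$.

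\emph{Examples.} For $p=5$ the $\pi$-multiples are $7+i,9+2i,6+3i,8+4i,10+5i$ with valuations $2,1,1,1,2$, against $2+i,4+2i,1+3i,3+4i,5+5i$ with valuations $1,1,1,2,1$. Your example does fail at $p=5$, but only at $\bar\pi=2-i$, where $v_{\bar\pi}(\Rect(10,5;5,5))=-1$. For $p=13$ and $A+Bi=3+2i$ even this disappears. The box $[27,39]\times[14,26]$ contains exactly one multiple of $\bar\pi^2$, namely $36+15i=3i\bar\pi^2$. So both sides have identical valuations at $\pi$ and at $\bar\pi$, no valuation argument can conclude, and you would have to compare unit parts modulo $p^3$, which the proposal does not attempt.

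\emph{The missing idea (the paper's argument).} Choose the translation so that the translated box contains \emph{no} multiple of $\pi^2$. Write $z=\pi q_z$ for the $p$ multiples of $\pi$ in $Q_p$; then $z+p\tau=\pi(q_z+\bar\pi\tau)$. Each row and column of $Q_p$ contains exactly one multiple of $\pi$, so $\sum q_z=\bar\pi\frac{p+1}{2}(1+i)\not\equiv0\pmod\pi$. Hence the $q_z$ miss some residue class, and $\tau$ can be chosen with $-\bar\pi\tau$ in that class. This gives $v_\pi(\Rect(pA,pB;p,p))\le p-(p+1)=-1$, while $\Rect(A,B;1,1)$ is $\pi$-integral. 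Your choice $A+Bi=\pi$ does the opposite, since it forces $pA+ipB=p\pi$ into the numerator.
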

\begin{proof}
    If $p>5$ and $p\equiv3\pmod4$, the congruence follows immediately by taking $k=1$ in Theorem \ref{thm:ljunggren}.We prove that every other prime must be excluded. First suppose that $p\equiv1\pmod4$. Then$$p=\pi\overline{\pi}$$for a Gaussian prime $\pi=a+bi$, where $a>b>0$. Consider the basic block$$Q_p=\{x+iy:1\leq x,y\leq p\}.$$Exactly $p$ elements of $Q_p$ are divisible by $\pi$. Moreover,$$\pi^{2}=(a^{2}-b^{2})+2abi\in Q_p,$$and therefore$$v_{\pi}\left(\prod_{z\in Q_p}z\right)\geq p+1.$$Let$$S=\{z\in Q_p:\pi\mid z\}, \qquad z=\pi q_z\quad(z\in S).$$The residues $q_z\bmod\pi$ do not form a complete residue system modulo $\pi$. Indeed,$$\sum_{z\in S}q_z = \overline{\pi}\frac{p+1}{2}(1+i) \not\equiv0\pmod\pi,$$whereas the sum of all elements of $\mathbb Z[i]/(\pi)\cong\mathbb F_p$ is zero.Consequently, one may choose $\tau\in\mathbb Z[i]$ such that$$q_z+\overline{\pi}\tau\not\equiv0\pmod\pi \qquad \text{for every }z\in S.$$After adding suitable multiples of $p$, write$$\tau=(A-1)+i(B-1), \qquad A,B\geq1.$$Then every $\pi$-divisible factor in the translated block $Q_p+p\tau$ is divisible by $\pi$ exactly once. Hence$$v_{\pi}\left(\prod_{z\in Q_p}(z+p\tau)\right)=p.$$Since$$\Rect(pA,pB;p,p) = \frac{\displaystyle\prod_{z\in Q_p}(z+p\tau)} {\displaystyle\prod_{z\in Q_p}z},$$we obtain$$v_{\pi}\bigl(\Rect(pA,pB;p,p)\bigr)\leq-1.$$On the other hand,$$\Rect(A,B;1,1)=\frac{A+iB}{1+i}$$is $\pi$-integral. Thus the two quantities cannot be congruent modulo $p^{3}$. Therefore the conjecture fails for every split prime $p\equiv1\pmod4$.It remains only to exclude $p=2$ and $p=3$. For $p=2$,$$\Rect(4,4;2,2)-\Rect(2,2;1,1) = 30-2 = 28 \not\equiv0\pmod{2^{3}}.$$For $p=3$,$$\Rect(6,6;3,3)-\Rect(2,2;1,1) = \frac{20008}{5}-2 = \frac{19998}{5},$$and$$v_3\left(\frac{19998}{5}\right)=2<3.$$Hence the conjecture also fails for $p=2$ and $p=3$.Thus the universal congruence holds precisely for$$p>5, \qquad p\equiv3\pmod4.$$
\end{proof}

As corollary of Theorem \ref{thm:ljunggren} 
 
\begin{corollary}[$p$-adic stabilization]\label{cor:padic-limit}
Under the hypotheses of Theorem~\ref{thm:ljunggren}, the sequence
\[
\left\{\Rect(p^kA,p^kB;p^kC,p^kD)\right\}_{k\geq0}
\]
is Cauchy in $\Op$, and hence converges in $\Op$.
\end{corollary}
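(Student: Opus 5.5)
Write $R_k:=\Rect(p^kA,p^kB;p^kC,p^kD)$ for $k\geq 0$, so that $R_0=\Rect(A,B;C,D)$. The plan is to read off consecutive differences directly from Theorem~\ref{thm:ljunggren} and then use completeness of $\Op$.

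First, every term lies in $\Op$. The hypotheses $p^kA\geq p^kC\geq1$ and $p^kB\geq p^kD\geq1$ hold for all $k\geq0$, so Lemma~\ref{lem:integrality} gives $R_k\in\Op$.

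Second, Theorem~\ref{thm:ljunggren}, applied with the fixed parameters $A,B,C,D$ and any $k\geq1$, gives
\[
R_k-R_{k-1}\in p^{3k}\Op .
\]
For $m>n\geq0$ the telescoping sum
\[
R_m-R_n=\sum_{k=n+1}^{m}(R_k-R_{k-1})
\]
is a finite sum of elements of $p^{3(n+1)}\Op$. Since $p^{3(n+1)}\Op$ is an additive subgroup, this gives $R_m-R_n\in p^{3(n+1)}\Op$, that is $|R_m-R_n|_p\leq p^{-3(n+1)}$. Equivalently, one can invoke the ultrametric inequality. Hence the sequence is Cauchy for the $p$-adic absolute value on $\mathbb Q_p(i)$.

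Third, $\Op=\Z_p[i]$ is complete: it is a free $\Z_p$-module of rank $2$ with basis $1,i$, and the $p$-adic norm is compatible with the coordinate max-norm because $p$ is unramified. It is also closed in $\mathbb Q_p(i)$. So the sequence converges to a limit in $\Op$. Letting $m\to\infty$ in the bound above gives the quantitative statement
\[
\lim_{m\to\infty}R_m\equiv R_n\pmod{p^{3(n+1)}} .
\]

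There is no real obstacle here. The only point needing care is to apply the theorem with the same $(A,B,C,D)$ at every level, so that the exponents $3k$ accumulate along the tail of the sequence.
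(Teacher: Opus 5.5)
Your proposal is correct and follows the paper's argument. The paper's proof is the single observation that Theorem~\ref{thm:ljunggren} gives $\vp(R_k-R_{k-1})\geq 3k\to\infty$; your telescoping (ultrametric) step, the appeal to Lemma~\ref{lem:integrality} for $R_k\in\Op$, and the completeness of $\Op$ simply spell out what that line leaves implicit, and your bound $\lim_m R_m\equiv R_n\pmod{p^{3(n+1)}}$ is a valid extra.
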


\begin{proof}
Theorem~\ref{thm:ljunggren} gives
\[
\vp(R_k-R_{k-1})\geq3k\longrightarrow\infty.
\]
\end{proof}

The restriction $p\equiv3\pmod4$ is structural in both proofs. It ensures that $p$ is inert, that reduction modulo $p$ gives the field $\F_{p^2}$, and that divisibility of $x+iy$ by $p$ is equivalent to simultaneous divisibility of its two coordinates. When $p\equiv1\pmod4$, the prime splits in $\Z[i]$ and the residue ring has zero divisors; a corresponding theory should keep track of the two prime ideals above $p$ separately.

Theorem~\ref{thm:bailey} is a modulus-$p$ rectangular counterpart of the one-digit congruences studied by Bailey. In one dimension Bailey's factorization survives modulo $p^2$ and $p^3$, and later work gives versions for arbitrary prime powers \cite{Bailey1990,Bailey1992,DavisWebb1990,Granville1997}. It is therefore natural to ask for the strongest higher-power lift of \eqref{eq:bailey-congruence}. Such a lift may require new correction factors: the two visible binomial factors record boundary strips modulo $p$, while higher powers should also detect the first $p$-adic moments of those strips.

A second problem is global integrality. Lemma~\ref{lem:integrality} gives integrality at every inert prime, but the coefficients need not be Gaussian integers globally. Determining their denominators, especially at split primes and at the ramified prime $2$, appears to be a basic prerequisite for a fuller arithmetic theory.

\subsection{Extension to imaginary quadratic orders}\label{subsec:quadratic-orders}

The block method is not intrinsically restricted to the square lattice.  Let
$\omega$ be an imaginary quadratic algebraic integer satisfying
\begin{equation}\label{eq:omega-minimal-polynomial}
\omega^2-T\omega+N=0,
\qquad
T,N\in\Z,
\qquad
\Delta:=T^2-4N<0,
\end{equation}
and put
\[
\mathcal O_\omega:=\Z[\omega].
\]
Thus $\mathcal O_\omega$ is an order in an imaginary quadratic field, with
\[
\overline\omega=T-\omega,
\qquad
N_{\mathcal O_\omega/\Z}(x+y\omega)
=x^2+Txy+Ny^2.
\]
For $A\geq C\geq1$ and $B\geq D\geq1$, define the corresponding rectangular coefficient by
\begin{equation}\label{eq:omega-rectangular-coefficient}
\mathcal R_\omega(A,B;C,D)
:=
\frac{
\displaystyle
\prod_{n=0}^{C-1}\prod_{m=0}^{D-1}
\bigl(A+B\omega-(n+m\omega)\bigr)
}{
\displaystyle
\prod_{n=1}^{C}\prod_{m=1}^{D}(n+m\omega)
}.
\end{equation}
Equivalently, $\mathcal R_\omega$ compares two congruent parallelograms in the lattice
$\Z+\Z\omega$.  The Gaussian coefficient is recovered by taking $\omega=i$, while
$\omega=\zeta_3$ gives the corresponding coefficient over the Eisenstein integers.
The following table records the part of the Gaussian theory that admits a natural
quadratic-order analogue.

\begin{table}[ht]
\centering
\small
\renewcommand{\arraystretch}{1.18}
\begin{tabularx}{\textwidth}{>{\raggedright\arraybackslash}p{0.23\textwidth}
>{\raggedright\arraybackslash}p{0.27\textwidth}
>{\raggedright\arraybackslash}X}
\toprule
\textbf{Arithmetic datum}
& \textbf{Gaussian case}
& \textbf{General imaginary quadratic order}\\
\midrule
Order and basis
& $\Z[i]=\Z\oplus\Z i$
& $\mathcal O_\omega=\Z\oplus\Z\omega$, with $\omega$ satisfying \eqref{eq:omega-minimal-polynomial} \\
Defining equation
& $i^2+1=0$
& $\omega^2-T\omega+N=0$ \\
Discriminant
& $-4$
& $\Delta=T^2-4N<0$ \\
Conjugation
& $i\mapsto-i$
& $\omega\mapsto\overline\omega=T-\omega$ \\
Norm form
& $x^2+y^2$
& $x^2+Txy+Ny^2$ \\
Unramified inert primes
& $p\equiv3\pmod4$
& $p\nmid\Delta$ and $\left(\frac{\Delta}{p}\right)=-1$ \\
Residue field
& $\Z[i]/(p)\simeq\F_{p^2}$
& $\mathcal O_\omega/(p)\simeq\F_{p^2}$ \\
Coordinate divisibility
& $p\mid x+iy$ iff $p\mid x$ and $p\mid y$
& $p\mid x+y\omega$ iff $p\mid x$ and $p\mid y$ \\
Unit symmetry and global moments
& $\{\pm1,\pm i\}$; only exponents divisible by $4$ survive
& Usually $\{\pm1\}$; only even exponents are forced to survive.  The cases $\Delta=-4,-3$ have extra symmetry \\
CM object
& $y^2=x^3-x$, with $j=1728$
& $E_\omega=\mathbb C/(\Z+\Z\omega)$, with CM by $\mathcal O_\omega$ and singular modulus $j(\omega)$ \\
Finite unit block
& $\{x+iy:1\leq x,y\leq p^k,\ p\nmid(x,y)\}$
& $\{x+y\omega:1\leq x,y\leq p^k,\ p\nmid(x,y)\}$ \\
Rectangular ratio
& $\Rect(A,B;C,D)$
& $\mathcal R_\omega(A,B;C,D)$ \\
Expected congruences
& Theorems~\ref{thm:ljunggren} and~\ref{thm:bailey}
& Uniform inert-prime analogues of the same formal shape \\
\bottomrule
\end{tabularx}
\caption{Comparison between the Gaussian theory and the proposed extension to an arbitrary imaginary quadratic order.}
\label{tab:gaussian-general-quadratic}
\end{table}

The natural local setting is obtained by fixing a rational prime $p$ such that
\begin{equation}\label{eq:omega-inert-prime}
p>5,
\qquad
p\nmid\Delta,
\qquad
\left(\frac{\Delta}{p}\right)=-1.
\end{equation}
Then $p$ is unramified and inert, the polynomial in \eqref{eq:omega-minimal-polynomial}
remains irreducible modulo $p$, and
\[
\mathcal O_{\omega,p}:=\Z_p[\omega]
\]
is the ring of integers of the unramified quadratic extension of $\mathbb Q_p$.
In particular,
\[
\mathcal O_\omega/(p)\simeq\F_{p^2}
\]
and divisibility by $p$ is detected simultaneously in the two coordinates.  These are
precisely the structural properties used in the proofs of Lemma~\ref{lem:integrality}
and Theorem~\ref{thm:ljunggren}.

For $k\geq1$, define
\[
U_{\omega,k}
:=
\left\{
 x+y\omega:
 1\leq x,y\leq p^k,
 \quad p\nmid(x,y)
\right\}
\]
and
\[
H_{r,\omega}(k)
:=
\sum_{z\in U_{\omega,k}}z^{-r}.
\]
The following conjecture isolates the local estimate on which a uniform theory would rest.

\begin{conjecture}\label{conj:omega-moments}
Let $\omega$ satisfy \eqref{eq:omega-minimal-polynomial}, and let $p$ satisfy
\eqref{eq:omega-inert-prime}.  Then, for every $k\geq1$,
\[
H_{1,\omega}(k)\in p^{2k}\mathcal O_{\omega,p},
\qquad
H_{2,\omega}(k)\in p^k\mathcal O_{\omega,p}.
\]
\end{conjecture}

Modulo $p$, the assertion follows from the cyclicity of $\F_{p^2}^{\times}$ for the
relevant reciprocal powers.  The essential point is to prove that the extra power of
$p$ in the first moment is uniform in the choice of the quadratic order and in the
basis $(1,\omega)$.  If Conjecture~\ref{conj:omega-moments} holds, Newton's identities
and the translated-block argument of Section~\ref{sec:ljunggren-proof} give the
following expected generalization.

\begin{conjecture}
\label{conj:omega-ljunggren}
Under the hypotheses of Conjecture~\ref{conj:omega-moments}, for every $k\geq1$ and
all $A\geq C\geq1$, $B\geq D\geq1$,
\[
\begin{aligned}
&\mathcal R_\omega(p^kA,p^kB;p^kC,p^kD)
\\
&\qquad\equiv
\mathcal R_\omega
(p^{k-1}A,p^{k-1}B;p^{k-1}C,p^{k-1}D)
\pmod{p^{3k}\mathcal O_{\omega,p}}.
\end{aligned}
\]
Consequently, the sequence
\[
\left\{
\mathcal R_\omega(p^kA,p^kB;p^kC,p^kD)
\right\}_{k\geq0}
\]
converges in $\mathcal O_{\omega,p}$.
\end{conjecture}

The one-digit congruence should also persist.  Indeed, inertness gives the Frobenius
relation
\[
\omega^p\equiv\overline\omega\pmod p.
\]
For $x,y\in\F_p^\times$, one therefore obtains
\begin{align}
\prod_{t\in\F_p}(x+\omega t)
&=
\frac{\omega-\overline\omega}{\omega}\,x,
\label{eq:omega-vertical-strip}\\
\prod_{t\in\F_p}(t+\omega y)
&=
(\overline\omega-\omega)y
\label{eq:omega-horizontal-strip}
\end{align}
inside $\mathcal O_\omega/(p)$.  The constants in
\eqref{eq:omega-vertical-strip}--\eqref{eq:omega-horizontal-strip} are nonzero modulo
$p$ and their total exponents cancel in the five-rectangle inclusion--exclusion
formula, exactly as in the Gaussian proof.

\begin{conjecture}
\label{conj:omega-bailey}
Let $p$ satisfy \eqref{eq:omega-inert-prime}, and suppose
\[
1\leq\gamma\leq\alpha\leq p-1,
\qquad
1\leq\delta\leq\beta\leq p-1.
\]
Then
\[
\begin{aligned}
&\mathcal R_\omega
(pA+\alpha,pB+\beta;pC+\gamma,pD+\delta)
\\
&\quad\equiv
\mathcal R_\omega(A,B;C,D)
\mathcal R_\omega(\alpha,\beta;\gamma,\delta)
\binom{\alpha}{\gamma}^{D}
\binom{\beta}{\delta}^{C}
\pmod{p\mathcal O_{\omega,p}}.
\end{aligned}
\]    
\end{conjecture}

The analytic interpretation is less uniform than the local block argument.  For
$\omega=i$, quarter-turn symmetry selects the moments of order $4n$, and for
$\omega=\zeta_3$, sixfold symmetry selects those of order $6n$.  For a general
imaginary quadratic order the unit group is usually $\{\pm1\}$, so symmetry forces
only the odd moments to vanish.  The surviving even reciprocal moments are Eisenstein
values of the CM lattice $\Z+\Z\omega$ and occur in the Laurent expansion of its
Weierstrass function.  It is therefore natural to expect the first terms beyond the
modulus $p^{3k}$ to depend on the CM elliptic curve $E_\omega$ and on normalized
lattice moments attached to the singular modulus $j(\omega)$.

\begin{problem}\label{prob:omega-cm-corrections}
Determine the first nonzero term in
\[
\frac{
\mathcal R_\omega(p^kA,p^kB;p^kC,p^kD)
}{
\mathcal R_\omega(p^{k-1}A,p^{k-1}B;p^{k-1}C,p^{k-1}D)
}-1
\]
and express it, whenever possible, through finite reciprocal moments and the Laurent
coefficients of the Weierstrass function of $E_\omega$.
\end{problem}

There are two further structural issues.  First, the definition
\eqref{eq:omega-rectangular-coefficient} depends on the oriented integral basis
$(1,\omega)$, not only on the abstract order $\mathcal O_\omega$.  A basis-free theory
should replace coordinate rectangles by parallelograms in fractional ideals.  Second,
when $p$ splits, the rational ideal $(p)$ factors into two prime ideals and the universal
congruence modulo $p^{3k}$ should not be expected.  The appropriate replacement should
be a pair of local congruences, one at each prime above $p$, with correction factors
reflecting the two CM components of Frobenius.

\begin{problem}\label{prob:omega-ideal-formulation}
Construct a basis-independent rectangular coefficient attached to an oriented ideal
lattice, and formulate separate local congruences at the two prime ideals above an
unramified split prime.  Determine which parts of Conjectures~\ref{conj:omega-moments}--\ref{conj:omega-bailey} survive after this ideal-theoretic reformulation.
\end{problem}

\subsection{Gamma-function representations}
The defining products can be rewritten in terms of Euler's Gamma function by repeated use of
$\Gamma(z+1)=z\Gamma(z)$; see, for example, \cite[Chapter~1]{AndrewsAskeyRoy1999}.

\begin{proposition}[Gamma representations]\label{prop:gamma-representations}
For $A\geq C\geq1$ and $B\geq D\geq1$,
\begin{align}
\Rect(A,B;C,D)
&=
\prod_{y=B-D+1}^{B}
\frac{\Gamma(A+1+iy)}{\Gamma(A-C+1+iy)}
\prod_{y=1}^{D}
\frac{\Gamma(1+iy)}{\Gamma(C+1+iy)},
\label{eq:gamma-horizontal}\\
&=
\prod_{x=A-C+1}^{A}
\frac{\Gamma(B+1-ix)}{\Gamma(B-D+1-ix)}
\prod_{x=1}^{C}
\frac{\Gamma(1-ix)}{\Gamma(D+1-ix)}.
\label{eq:gamma-vertical}
\end{align}
\end{proposition}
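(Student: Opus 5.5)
The plan is to fix the rows and turn each row of the numerator and denominator into a quotient of Gamma values using the telescoping identity
\[
\prod_{n=0}^{M-1}(w+n)=\frac{\Gamma(w+M)}{\Gamma(w)},
\]
which follows from repeated use of $\Gamma(z+1)=z\Gamma(z)$ and is valid whenever $w,w+1,\dots,w+M-1$ avoid the poles of $\Gamma$.

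For \eqref{eq:gamma-horizontal}, start from the rectangle form \eqref{eq:def-R-rectangle}. For a fixed row $y$ in the numerator, the product over $x=A-C+1,\dots,A$ of $x+iy$ is the product of $C$ consecutive shifts of $w=A-C+1+iy$. Hence it equals $\Gamma(A+1+iy)/\Gamma(A-C+1+iy)$.

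Similarly, for a fixed row $y$ with $1\le y\le D$ in the denominator, the product over $x=1,\dots,C$ equals $\Gamma(C+1+iy)/\Gamma(1+iy)$. Taking the product over $y$ and inverting the denominator gives exactly \eqref{eq:gamma-horizontal}.

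All arguments have imaginary part $y\geq1$, so none is a nonpositive integer. This justifies the Gamma identity and shows that no Gamma value vanishes or is infinite; indeed $\Gamma$ has no zeros at all.

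For \eqref{eq:gamma-vertical}, swap the roles of the coordinates using $x+iy=i(y-ix)$. For fixed $x$, the product over $y$ of $(y-ix)$ is again a product of consecutive shifts, now with base point $B-D+1-ix$ in the numerator and $1-ix$ in the denominator.

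Numerator and denominator each contain $CD$ factors, so the powers of $i$ contribute $i^{CD}/i^{CD}=1$. Applying the same telescoping identity column by column then gives \eqref{eq:gamma-vertical}. Here the arguments have imaginary part $-x\neq0$, so they are again pole-free.

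There is no serious obstacle in this argument. The only points needing care are matching the index ranges to the correct Gamma shifts, and checking that the factors of $i$ cancel in the vertical version.
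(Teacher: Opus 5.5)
Your proposal is correct and follows the paper's proof essentially verbatim: row-by-row telescoping via $\Gamma(z+1)=z\Gamma(z)$ gives the horizontal form, and writing $x+iy=i(y-ix)$ column by column gives the vertical form, with the $i^{CD}$ factors cancelling between numerator and denominator. Your check that every Gamma argument has nonzero imaginary part, and so avoids the poles, is a small justification the paper leaves implicit.
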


\begin{proof}
For fixed $y$,
\[
\prod_{x=A-C+1}^{A}(x+iy)
=
\frac{\Gamma(A+1+iy)}{\Gamma(A-C+1+iy)},
\qquad
\prod_{x=1}^{C}(x+iy)
=
\frac{\Gamma(C+1+iy)}{\Gamma(1+iy)}.
\]
Multiplying over the relevant values of $y$ proves \eqref{eq:gamma-horizontal}. Alternatively, for fixed $x$ one factors $i$ from each term and obtains
\begin{align*}
\prod_{y=B-D+1}^{B}(x+iy)
&=
i^D\frac{\Gamma(B+1-ix)}{\Gamma(B-D+1-ix)},\\
\prod_{y=1}^{D}(x+iy)
&=
i^D\frac{\Gamma(D+1-ix)}{\Gamma(1-ix)}.
\end{align*}
The factors $i^{CD}$ cancel between numerator and denominator, giving \eqref{eq:gamma-vertical}.
\end{proof}

These formulas provide a first analytic continuation in the upper parameters and show that the coefficients are hypergeometric in each lower parameter. More concretely, extending the convention by $\Rect(A,B;0,D)=\Rect(A,B;C,0)=1$, one has the multiplicative recurrences
\begin{align}
\frac{\Rect(A,B;C,D)}{\Rect(A,B;C-1,D)}
&=
\frac{\displaystyle\prod_{y=B-D+1}^{B}(A-C+1+iy)}
{\displaystyle\prod_{y=1}^{D}(C+iy)},
\label{eq:C-recurrence}\\
\frac{\Rect(A,B;C,D)}{\Rect(A,B;C,D-1)}
&=
\frac{\displaystyle\prod_{x=A-C+1}^{A}(x+i(B-D+1))}
{\displaystyle\prod_{x=1}^{C}(x+iD)}.
\label{eq:D-recurrence}
\end{align}
They are two-dimensional analogues of the successive-quotient formula
\[
\frac{\binom{N}{K}}{\binom{N}{K-1}}
=
\frac{N-K+1}{K}.
\]

Because \eqref{eq:gamma-horizontal} and \eqref{eq:gamma-vertical} contain finite strings of Gamma values along the Gaussian lattice, Barnes's double Gamma function is a natural candidate for compressing them into a quotient of a bounded number of special-function factors \cite{Barnes1899,Barnes1901}. This viewpoint suggests the following problems.

\begin{problem}[Barnes continuation]\label{prob:barnes}
Construct a canonical Barnes double-Gamma representation with periods $1$ and $i$. Determine whether it gives a meromorphic continuation of $\Rect(A,B;C,D)$ in all four parameters that is compatible with both lattice directions and with the arithmetic normalization used in this paper.
\end{problem}

\begin{problem}[Pascal and complementary-rectangle identities]\label{prob:pascal}
Determine whether there is a natural normalization
\[
\widetilde{\Rect}(A,B;C,D)
=
N(A,B;C,D)\Rect(A,B;C,D)
\]
for which the multiplicative recurrences \eqref{eq:C-recurrence}--\eqref{eq:D-recurrence} can be converted into a local additive Pascal-type identity. Likewise, decide whether a normalized complementary-rectangle symmetry exists under
\[
(C,D)\longleftrightarrow(A-C,B-D),
\]
possibly with an explicit boundary correction factor.
\end{problem}

\begin{problem}[Vandermonde, generating functions, and inversion]\label{prob:vandermonde}
Find, or rule out, two-dimensional analogues of the Chu--Vandermonde convolution, the binomial theorem, and binomial inversion. In particular, study bivariate generating functions of the form
\[
\mathcal G_{A,B}(u,v)
=
\sum_{C=0}^{A}\sum_{D=0}^{B}
\widetilde{\Rect}(A,B;C,D)u^Cv^D
\]
and ask whether an appropriate normalization admits a product, determinant, or hypergeometric representation. A satisfactory identity should explain geometrically how a large rectangle decomposes into smaller rectangles and should recover the ordinary one-dimensional identity on a suitable boundary specialization.
\end{problem}

A combinatorial or representation-theoretic interpretation would be especially valuable. It could explain which normalization is canonical, why the boundary factors in Theorem~\ref{thm:bailey} appear, and whether positivity or integrality survives after passing from ordinary binomial coefficients to Gaussian rectangular ones.

\section*{Acknowledgements}
The author thanks Nikita Kalinin , Mikhail Shkolnikov , Higinio Serrano ,  and Ernesto Lupercio for valuable suggestions and corrections.

\nocite{*}
\printbibliography

\end{document}